\def \a {\alpha}
\def \b {\beta}
\def \d {\delta}
\def \e {\epsilon}
\def \s {\sigma}
\def \v {\nu}
\def \F {{\mathbb F}}
\def \deg {{\rm deg}}
\def \A {{\mathcal A}}
\def \G {{\mathcal G}}
\newtheorem{theorem}{Theorem}[section]
\newtheorem{corollary}[theorem]{Corollary}
\newtheorem{proposition}[theorem]{Proposition}
\newtheorem{example}{Example}
\newtheorem{remark}[theorem]{Remark}
\begin{document}

\title[Subfields of the Hermitian function field]{On  subfields of the Hermitian function fields involving the involution automorphism}
\thanks{The first author is partially supported by the National Natural Science Foundation of China under Grant 11501493 
and the State Scholarship Fund of China Scholarship Council.}
\author{Liming Ma}\address{School of Mathematical Sciences, Yangzhou University, Yangzhou China
225002}\email{lmma@yzu.edu.cn}
\author{Chaoping Xing} \address{Division of Mathematical Sciences, School of Physical \& Mathematical Sciences,
Nanyang Technological University, Singapore
637371}\email{xingcp@ntu.edu.sg}
\maketitle

\begin{abstract}
A  function field over a finite field is called maximal if it achieves the Hasse-Weil bound. Finding possible genera that maximal function fields achieve has both theoretical interest and practical applications to coding theory and other topics.  As a subfield of a maximal function field is also maximal, one way to find maximal function fields is to find all subfields of a maximal function field.  Due to the large automorphism group of the Hermitian function field, it is natural to find as many subfields of the Hermitian function field as possible. In literature, most of papers studied subfields fixed by subgroups of the decomposition group at one point (usually the point at infinity). This is because it becomes much more complicated to study the subfield fixed by a subgroup  that is not contained in the decomposition group at one point.
 In this paper, we study subfields of the Hermitian function field fixed by subgroups that are not contained in the decomposition group of any point except the cyclic subgroups. It turns out that some new  maximal function fields are found.
\end{abstract}

\section{Introduction}
Let $\mathbb F_\ell$ be a finite field with $\ell$ elements and $F/\mathbb F_\ell$ be an algebraic function field of one variable with the full constant field $\mathbb F_\ell$ with genus $g$.
If the number of rational places of $F$ attains the Hasse-Weil bound
$$N(F)\leq \ell+1+2g\sqrt{\ell},$$
then $F$ is said to be \emph{maximal}.
It follows that $F$ could be maximal only if either $g$ is zero or $\ell$ is a square.

The most important example of maximal function field is the
Hermitian function field $H /\mathbb F_{\ell}$ with $\ell=q^2$, where
$q$ is a prime power. The Hermitian function field $H$ over $\F_{q^2}$ is defined by the equation
$$y^{q}+y=x^{q+1}$$
with $H=\F_{q^2}(x,y)$.

The set of rational places of $H$ consists of the infinite place $P_{\infty}$ which is the unique common pole of $x$ and $y$ and $P_{\alpha,\beta}$ which is the unique common zero of $x-\alpha$ and $y-\beta$ for each $(\a,\beta) \in \mathbb{F}_{q^2}^2$ satisfying $\b^q+\b=\a^{q+1}$. Thus, it has $q^3+1$ rational places in total. The genus of the Hermitian function field is $(q^2-q)/2.$
Furthermore, the Hermitian function field is the unique maximal function field of genus $(q^2-q)/2$ over the finite field $\mathbb{F}_{q^2}$ (see \cite{RS94}).
The automorphism group $\mathcal{A}$ of the Hermitian function field is defined by
$$\mathcal{A}=\text{Aut}(H/\mathbb{F}_{q^2})=\{\sigma:H\mapsto H| \sigma \text{ is a } \mathbb{F}_{q^2}\text{-automorphism of } H\}.$$
This  automorphism group is extremely large and
isomorphic to the projective unitary group $\text{PGU}(3,q^2)$ with order $q^3(q^2-1)(q^3+1)$ (see \cite{St73}).
The decomposition group $\A(P_\infty)$ of the infinite place is equal to
\[\{\sigma \in \mathcal A :\;
\sigma(P_\infty)=P_\infty\}=\{\sigma=[a,b,c]:\; (a,b,c)\in \mathbb{F}_{q^2}^*\times\mathbb{F}_{q^2}\times\mathbb{F}_{q^2}, c^{q}+c=b^{q+1}\},
\] where
$\sigma=[a,b,c]$ is the automorphism defined by
$$
\begin{cases}
\sigma(x) =ax+b,\\
\sigma(y) = a^{q+1}y+ab^{q}x+c.
\end{cases}
 $$
Thus, the order of $\A(P_\infty)$ is $q^3(q^2-1)$. Apart from the automorphisms of $\A(P_\infty)$, there is  an {\it involution}  automorphism, denoted by $\omega$,  given by
$$\omega(x)=\frac{x}{y}, \qquad \omega(y)=\frac{1}{y}.$$
The order of $\omega$ is $2$.
Then the full automorphism
group of  $H$ is generated by $\mathcal A(P_\infty)$ and $\omega$, i.e.,
$\mathcal A=\langle \mathcal A(P_\infty), \omega \rangle.$

For a maximal function field $F/\mathbb F_{q^2}$,
any function field $E$ with $\mathbb F_{q^2} \subsetneq E \subseteq F$ is maximal as well (see \cite{La87}).
Hence, one can construct a large number of maximal function fields by considering the fixed subfields with respect to some subgroups of the automorphism group $\mathcal A$ of the Hermitian function fields (see \cite{AQ04, BMXY13, CK99, CKT99, FGT97, FT96, GSX00, GK09,HKT08, MXY16, NX01, XS95}).

The goal of this article to find out possible genera which are achieved by  maximal function fields. One natural way to realize this goal is to find as many subfields of $H$ as possible. This is equivalent to finding as many subgroups of ${\mathcal A}$ as possible. The subfields of the Hermitian function fields considered in the literature are usually those fixed by  subgroups of the decomposition group $\mathcal A(P_\infty)$ except for \cite{GSX00} where the subgroup is generated by $\omega$ and the subgroup $\{\sigma=[a,0,0]:\; a\in \mathbb{F}_{q^2}^*\}$ and the characteristic of $\mathbb{F}_{q^2}$ is odd, etc.  In particular,  it was showed in \cite{BMXY13} that we can obtain all the genera of maximal function fields from the fixed subfields of the subgroups of the $\mathcal A(P_\infty)$ for the odd characteristic case. In   this article, we consider various subgroups of  ${\mathcal A}$ that involve both the decomposition group $\mathcal A(P_\infty)$ and the involution automorphism $\omega$. It would be exciting to find all subgroups of ${\mathcal A}$ and genera of corresponding subfields. The current article  moves one step forward by considering the involution automorphism $\omega$. Most of the subgroups discussed in this paper are not contained in the decomposition group at any point except the cyclic subgroup. Thus, the subfields of the Hermitian function field obtained in this paper are new despite some genera of these subfields have  already been found in literature.

We summarize genera obtained in this paper in Tables I and II. In particular, we get some new genera  that are not achievable by  subgroups of the decomposition group $\mathcal A(P_\infty)$. The genera in Table 1 (i)-(iii) and Table 2 (ii) have not be found in literature, while the rest of the genera given in Tables 1 and 2 can be found in \cite{CKT99, GSX00}.    

{\footnotesize
\begin{table}
\caption{Genera for even characteristic}

\begin{tabular}{|c|c|c|c|}
\hline
No. &Conditions on parameter & Genera & References\\ \hline\hline
 (i)&$d=(m,q+1), \tilde{d}=(m,q-1)$ & $ [{q^2-q+m-(d-1)(q-1)-\tilde{d}(q+1)}]/({4m})$ & Thm \ref{a002m}\\
 \hline
(ii)& $m|(q-1)$ & $(q^2-q-mq)/(4m)$ & Thm  \ref{10c-} \\
\hline
(iii)& $m|(q+1)$ & $(q^2-q-mq+2m-2)/(4m)$  & Thm  \ref{10c+}\\
\hline
(iv)&  $m|(q^2-1), d=(m,q+1)$ & $(q-1)(q+1-d)/(2m)$  & Thm  \ref{a0c-}\\
\hline
 \end{tabular}

\end{table}

}
{\footnotesize
\begin{table}
\caption{Genera for odd characteristic}

\begin{tabular}{|c|c|c|c|}
\hline
No. &Conditions on parameter & Genera & References\\ \hline\hline
 (i)& $3\nmid (q+1), m|(q+1)$, $m$ is odd  &  $1+(q^2-q-2)/(2m)$ & Thm  \ref{o10c+} \\
\hline
 (ii)& $3\nmid (q+1), 4|m|(q+1), q\equiv 3(\text{mod } 8)$ & $1+(q^2-4q-5)/(2m)$  &Thm \ref{o10c+} \\
\hline
 (iii)& $3\nmid (q+1), m|(q+1)$ and (i), (ii) are not satisfied & $1+(q^2-2q-3)/(2m)$  &Thm  \ref{o10c+} \\
\hline
(iv)&  $m|2(q-1)$, $m$ is odd & $(q^2-q)/(2m)$  & Thm  \ref{o10c-}\\
\hline
(v)& $4|m|2(q-1), q\equiv 3(\text{mod } 4)$ & $(q^2-4q+3)/2m$  & Thm  \ref{o10c-} \\
\hline
 (vi) & $m|2(q-1)$ and No. (iv), (v) are not satisfied  & $(q^2-2q+1)/(2m)$  & Thm  \ref{o10c-} \\
  \hline
(vii)& $m|(q+1)$ & $(q-1)(q+1-m)/(2m)$  & Thm  \ref{oa0c-} \\
\hline
     (viii)&    $m|2(q+1), m\nmid (q+1)$ & $(q-1)(q+1-\frac{m}{2})/(2m)$  & Thm  \ref{oa0c-}  \\
\hline
\end{tabular}

\end{table}
}

\section{Preliminary}
Let ${\mathcal G}$ be a subgroup of the automorphism group $\mathcal{A}$ and let $H^{\mathcal G}$ be the fixed subfield of the 
Hermitian function field $H$ with respect to ${\mathcal G}$, i.e.,
$$H^{\mathcal G}=\{z\in H|\sigma(z)=z \text{ for all } \sigma \in {\mathcal G}\}.$$
Then $H/H^{\mathcal G}$ is a Galois extension of algebraic function fields with the Galois group ${\mathcal G}=\text{Gal}(H/H^{\mathcal G})$.
The Hurwitz genus formula yields
$$2g(H)-2=|{\mathcal G}|\cdot [2g(H^{\mathcal G})-2]+\deg \text{ Diff}(H/H^{\mathcal G}),$$
where $\text{Diff}(H/H^{\mathcal G})$ is the different of the extension $H/H^{\mathcal G}$.

Let $P$ be a place of $H$ and let $Q=P\cap H^{\mathcal G}$ be the restriction of $P$ to $H^{\mathcal G}$.
We denote by $d(P)=d(P|Q), \ e(P)=e(P|Q)$  the different exponent and ramification index of $P|Q$, respectively.
Then the different of $H/H^{\mathcal G}$ is given by
$$\text{Diff}(H/H^{\mathcal G})=\sum_{P\in H} d(P) P.$$
If $P|Q$ is unramified or tamely ramified, then $d(P)=e(P)-1$ by Dedekind's Different Theorem \cite[Theorem 3.5.1]{St09}.
The $i$-th ramification group ${\mathcal G}_i(P)$ of $P|Q$ for each $i\ge -1$ is defined by 
$${\mathcal G}_i(P)=\{ \sigma\in {\mathcal G}| v_P(\sigma(z)-z) \ge i+1 \text{ for all } z\in \mathcal{O}_P\},$$
where $v_P$ is the normalized discrete valuation of $H$ corresponding to the place $P$. 
If $P|Q$ is wildly ramified, that is, $e(P)$ is divisible by $\text{char}(\mathbb{F}_{q^2})$, then the different exponent $d(P)$ is
$$d(P)=\sum_{i=0}^{+\infty} \Big{(} |{\mathcal G}_i(P)|-1\Big{)}$$
by  Hilbert's Different Theorem \cite[Theorem 3.8.7]{St09}.

It has been showed that any ramified place $P\in \mathbb{P}_H$ in the extension $H/H^{\mathcal G}$ must be a rational place or a place of degree three (see \cite[Proposition 2.2]{GSX00}). Furthermore,  a place of degree three of $H$ in the extension $H/H^{\mathcal G}$  is unramified or tamely ramified, and has ramification index  being a divisor of $q^2-q+1$.
In particular, assume that $H/H^{\mathcal G}$ is tamely ramified and all places of degree $3$ of $H$ are unramified in $H/H^{\mathcal G}$.
If $P$ is tamely ramified, then $$d(P)=e(P)-1=\# \{\sigma \in {\mathcal G}\setminus \{1\} :\; \sigma(P)=P   \}.$$
Let $N(\sigma)$ be the number of rational places stabilized by the automorphism $\sigma$, that is,
$$N(\sigma)=\# \{P\in \mathbb{P}_H:\; \deg(P)=1 \text{ and } \sigma(P)=P\}.$$
Hence, the degree of the different of $H/H^{\mathcal G}$ is
$$\deg (\text{Diff}(H/H^{\mathcal G}))=\sum_{P\in \mathbb{P}_H, \deg P=1} (e(P)-1)=\sum_{1\neq \sigma \in {\mathcal G}} N(\sigma).$$

\section{The fixed subfields of $\langle [a,0,0], \omega \rangle$ in even characteristic}
In this section, we consider the group ${\mathcal C}$ which is generated by the automorphisms $\epsilon$ and $\omega$, where
$$\e(x)=ax,\quad \e(y)=a^{q+1}y\quad \text{ and }\quad \omega(x)=x/y,\quad \omega(y)=1/y.$$
Here $a$ is a primitive $(q^2-1)$-th root of unity. This subgroup was first discussed in \cite{GSX00} where only odd characteristic was considered. The reason why the even characteristic was not considered is that the extension is wildly ramified in  the case of even characteristic. Thus, we consider only the even characteristic case in this section.

Any $\sigma\in {\mathcal C}$ must be in the form $\e^i$ or $\omega \e^i$ for some $0\le i \le q^2-2$, since $ \e \omega=\omega \e^{-q}$.
Hence, $\text{ord}({\mathcal C})=2(q^2-1)$. The automorphisms in $\mathcal C$ can be given explicitly in the following form
$$\e^i(x)=a^ix, \quad \e^i(y)=a^{i(q+1)}y \quad  \text{ and } \quad  \omega \e^i(x)=\frac{a^ix}{y}, \quad \omega \e^i(y)=\frac{a^{i(q+1)}}{y}$$
for $0\le i \le q^2-2.$
In order to obtain the genus of the fixed subfield from the Hurwitz genus formula, we need to calculate the different exponent for each ramified place in the extension $H/H^{\mathcal C}$.

\begin{proposition}
\label{2diff}
Assume that $\text{char}(\mathbb{F}_{q^2})=2$.  Let $H$ be the Hermitian function field over $\mathbb{F}_{q^2}$ and 
let ${\mathcal C}$ be the group generated by the $\e$ and $\omega$. Then the different of the extension $H/H^{\mathcal C}$ is
$${\rm Diff}(H/H^{\mathcal C})=(q^2-2)P_{\infty}+(q^2-2)P_{0,0}+(3q+2)\sum_{\b\in \mathbb{F}_q^*}P_{0,\b}.$$
\end{proposition}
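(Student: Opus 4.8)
\emph{Proof proposal.} The plan is to determine all places of $H$ that ramify in $H/H^{{\mathcal C}}$ together with their ramification data, and then to read off the different exponents --- from $d(P)=e(P)-1$ in the tame case and from Hilbert's Different Theorem in the wild case. By the result of \cite[Proposition 2.2]{GSX00} recalled above, only rational places and places of degree $3$ can ramify, and those of degree $3$ are at worst tamely ramified with ramification index dividing $q^2-q+1$.

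First I would analyse the action of ${\mathcal C}$ on the relevant places. The automorphism $\omega$ interchanges $P_\infty$ and $P_{0,0}$, and each $\e^i$ fixes both of them, so $\langle\e\rangle$ is the stabilizer of $P_\infty$ (and of $P_{0,0}$) and the common orbit is $\{P_\infty,P_{0,0}\}$; hence $e(P_\infty)=e(P_{0,0})=q^2-1$. Since $q$ is even this index is odd, so these two places are tamely ramified and $d(P_\infty)=d(P_{0,0})=q^2-2$. Likewise $\omega$ and $\e$ both preserve the set $\{P_{0,\g}:\g\in\F_q^*\}$ and act transitively on it (because $a^{q+1}$ generates $\F_q^*$), so for $\b\in\F_q^*$ the orbit of $P_{0,\b}$ has size $q-1$ and $e(P_{0,\b})=|{\mathcal C}|/(q-1)=2(q+1)$, which is even, so $P_{0,\b}$ is \emph{wildly} ramified. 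One then checks that these are the only ramified places. No place $P_{\a,\b}$ with $\a\neq0$ is fixed by a nontrivial element: among the $\e^i$ only $\e^0$ fixes it (look at the $x$-coordinate), and if some $\omega\e^i$ fixed it then necessarily $(q+1)\mid i$, which in characteristic $2$ forces $\a^{q+1}=\b^q+\b=0$, a contradiction. Finally, a place of degree $3$ would have ramification index dividing $\gcd(q^2-q+1,|{\mathcal C}|)=\gcd(q^2-q+1,q^2-1)=\gcd(q-2,3)\in\{1,3\}$; if it were $3$, the inertia group would be the unique order-$3$ subgroup of ${\mathcal C}$, which lies in $\langle\e\rangle$ and is generated by $\e^{(q^2-1)/3}$ (no $\omega\e^i$ has order $3$), and this automorphism multiplies $x$ by a primitive cube root of unity; hence it cannot act trivially on the residue field of a place that is not a zero of $x$, and a degree-$3$ place is not a zero of $x$. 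Thus no degree-$3$ place ramifies.

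The final and most substantial step is to compute $d(P_{0,\b})$, $\b\in\F_q^*$, by Hilbert's Different Theorem. Since $P_{0,\b}$ is rational the residue extension is trivial, so ${\mathcal C}_0(P_{0,\b})$ equals the stabilizer and has order $2(q+1)$; as $q+1$ is odd, the wild inertia ${\mathcal C}_1(P_{0,\b})$ has order $2$, generated by the unique involution of ${\mathcal C}_0(P_{0,\b})$, which must be of the form $\tau=\omega\e^{i_\b}$ with $(q+1)\mid i_\b$, so that $\tau(x)=a^{i_\b}x/y$ with $a^{i_\b}\in\F_q^*$. Now $x$ is a uniformizer at $P_{0,\b}$ (its divisor of zeros is $\sum_{\g\in\F_q}P_{0,\g}$), and ${\mathcal C}_i(P_{0,\b})$ is determined by $v_{P_{0,\b}}(\tau(x)-x)$. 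Since $\tau\in{\mathcal C}_1(P_{0,\b})$ we have $v_{P_{0,\b}}(\tau(x)-x)\ge 2$, and writing $\tau(x)-x=x(a^{i_\b}-y)/y$ this forces $a^{i_\b}=\b$. Next, substituting $y=\b+u$ in $y^q+y=x^{q+1}$ and using $\b^q+\b=0$ yields $u^q+u=x^{q+1}$, whence $v_{P_{0,\b}}(y-\b)=v_{P_{0,\b}}(u)=q+1$, and therefore
$$ v_{P_{0,\b}}(\tau(x)-x)=v_{P_{0,\b}}\!\left(\frac{x(\b-y)}{y}\right)=1+(q+1)-0=q+2. $$
Consequently $\tau\in{\mathcal C}_i(P_{0,\b})$ precisely for $0\le i\le q+1$, so $|{\mathcal C}_0(P_{0,\b})|=2(q+1)$, $|{\mathcal C}_i(P_{0,\b})|=2$ for $1\le i\le q+1$, and $|{\mathcal C}_i(P_{0,\b})|=1$ for $i\ge q+2$. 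Hilbert's Different Theorem then gives $d(P_{0,\b})=(2(q+1)-1)+(q+1)\cdot 1=3q+2$, and collecting the three contributions yields the claimed formula for ${\rm Diff}(H/H^{{\mathcal C}})$.

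The heart of the matter, and the step I expect to be the main obstacle, is the wild place $P_{0,\b}$: one must correctly single out the generator $\tau$ of the wild inertia and then evaluate $v_{P_{0,\b}}(\tau(x)-x)$, the crucial point being the local relation $u^q+u=x^{q+1}$ which makes $v_{P_{0,\b}}(y-\b)=q+1$ rather than the value $1$ one might naively expect. The remaining points --- in particular excluding degree-$3$ ramification when $q\equiv 2\pmod 3$ --- are comparatively routine.
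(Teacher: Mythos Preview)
Your argument is correct and, for the rational places, runs along the same lines as the paper: determine the stabilizers of $P_\infty$, $P_{0,0}$, the $P_{0,\beta}$ with $\beta\in\F_q^*$, and the $P_{\alpha,\beta}$ with $\alpha\neq 0$; then at the wildly ramified $P_{0,\beta}$ use the local relation coming from $y^q+y=x^{q+1}$ to get $v_{P_{0,\beta}}(y-\beta)=q+1$ and hence $v_{P_{0,\beta}}(\tau(x)-x)=q+2$, which via Hilbert's Different Theorem gives $d(P_{0,\beta})=3q+2$. Your use of orbit--stabilizer to obtain $|{\mathcal C}_0(P_{0,\beta})|=2(q+1)$ and your isolation of the unique involution in that stabilizer are a little slicker than the paper's element-by-element count, but the substance is identical.

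The one genuine difference is how you exclude ramification at degree-$3$ places. The paper does \emph{not} argue directly: it simply feeds the rational-place contribution into the Hurwitz formula as an inequality,
\[
q^2-q-2 \ \ge\ 2(q^2-1)\bigl[2g(H^{\mathcal C})-2\bigr] + 2(q^2-2) + (3q+2)(q-1),
\]
deduces $g(H^{\mathcal C})\le 0$, and concludes that equality must hold, so no further ramification occurs. Your route is a direct group-theoretic argument: the only candidate inertia group has order $3$, it must be $\langle \e^{(q^2-1)/3}\rangle$ (since every $\omega\e^i$ has even order), and this automorphism multiplies $x$ by a nontrivial cube root of unity, hence cannot lie in ${\mathcal G}_0(P)$ at a place where $x$ is a unit. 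Both arguments are valid; the paper's Hurwitz trick is shorter and requires no analysis of the group structure of ${\mathcal C}$, while yours is more explicit and explains \emph{why} the degree-$3$ places are unramified rather than inferring it from a numerical coincidence.
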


\begin{proof} 
First let us calculate the different exponent of each rational place of $H$.
For the infinity place $P_{\infty}$,
$$\sigma(P_{\infty})=P_{\infty} \Leftrightarrow \sigma \in \mathcal{A}(P_{\infty})\cap {\mathcal C}= \langle \e \rangle.$$
Then the order of the decomposition group of $P_{\infty}$ in $H/H^{\mathcal C}$ is $|{\mathcal G}_0(P_{\infty})|=q^2-1$. 
Hence,  the different exponent of $P_{\infty}$ in $H/H^{\mathcal C}$ is $d(P_{\infty})=e(P_{\infty})-1=q^2-2.$ 

For the place $P_{0,0}$, it is easy to see that $\e^i(P_{0,0})=P_{0,0}$ and $ \omega \e^i(P_{0,0})=P_{\infty}$ for any $0\le i\le q^2-2$.
Hence, $|{\mathcal G}_0(P_{0,0})|=q^2-1$ and the different exponent of $P_{0,0}$  in $H/H^{\mathcal C}$ is also $d(P_{0,0})=q^2-2.$

For a place $P_{0,\beta}$ with $\b^q+\b=0 \text{ and } \beta\neq 0$,
$$\e^i(P_{0,\beta})=P_{0,\beta}  \Leftrightarrow a^{i(q+1)}\b=\b  \Leftrightarrow (q-1)|i.$$ 
Moreover, $x$ is a prime element of $P_{0,\beta}$ and for $i=(q-1)k$ with $0< k\le q$,
$$v_{P_{0,\b}}(\e^i(x)-x) =v_{P_{0,\b}}(a^ix-x) =v_{P_{0,\b}}((a^i-1)x)=1.$$
On the other hand,
$$ \omega \e^i(P_{0,\beta})=P_{0,\beta}   \Leftrightarrow  a^{i(q+1)}/\b=\b   \Leftrightarrow  a^{i(q+1)}=\b^2.$$
Then there are exactly $q+1$ automorphisms $ \omega \e^i$ such that $ \omega \e^i(P_{0,\beta})=P_{0,\beta}$. 
If $\b$ runs through all elements of $\mathbb{F}_q^*$, then the solutions $i$ runs through the set of positive integers between $0$ and $q^2-2$. 
For such an integer $i$ satisfying with  $ \omega \e^i(P_{0,\beta})=P_{0,\beta}$, we have
$$v_{P_{0,\b}}\Big{(} \omega \e^i(x)-x\Big{)} =v_{P_{0,\b}}\Big{(}\frac{a^ix}{y}-x\Big{)} =v_{P_{0,\b}}\Big{(}\frac{y-a^i}{y}x\Big{)} =\begin{cases}
    q+2  & \text{if } \b=a^i,\\
      1& \text{otherwise.}
\end{cases}$$
The last equation holds true since the place $P_{0,\b}$ is the unique common zero of $x$ and $y-\b$, i.e., $v_{P_{0,\b}}(x)=1$ and  $v_{P_{0,\b}}(y-\b)=q+1$ which is obtained from  the defining equation $y^q+y=x^{q+1}$ and the Strict Triangle Inequality.
Hence, $|{\mathcal G}_0(P_{0,\b})|=2q+2, \ |{\mathcal G}_1(P_{0,\b})|= |{\mathcal G}_2(P_{0,\b})|=\cdots = |{\mathcal G}_{q+1}(P_{0,\b})|=2$ and $ |{\mathcal G}_{q+2}(P_{0,\b})|=1$.
By Hilbert's Different Theorem, the different exponent of $P_{0,\b}$ is 
$$d(P_{0,\b})=\sum_{i=0}^{+\infty}\Big{(}|{\mathcal G}_i(P_{0,\b})|-1\Big{)}=3q+2.$$

For a place $P_{\a,\b}$ with $\b^q+\b=\a^{q+1} \text{ and } \a\neq 0$,
$$\e^i(P_{\a,\beta})=P_{\a,\beta}   \Leftrightarrow  a^i \a=\a,\ a^{i(q+1)}\b=\b   \Leftrightarrow  i=0.$$ On the other hand,
$$ \omega \e^i(P_{\a,\beta})=P_{\a,\beta}   \Leftrightarrow  \frac{a^i \a}{\b}=\a, \ \frac{a^{i(q+1)}}{\b}=\b   \Leftrightarrow  \b=a^i,\ \b^2=a^{i(q+1)}.$$
Then $\b\in \mathbb{F}_q$ which is a contradiction to $\b^q+\b=\a^{q+1}\neq 0$.
Hence, $|{\mathcal G}_0(P_{\a,\b})|=1$ and the different exponent of $P_{\a,\b}$ is $d(P_{\a,\b})=0.$ 

By the Hurwitz genus formula, we have
$$q^2-q-2\ge 2(q^2-1)[2g(H^{\mathcal C})-2]+(q^2-2)+(q^2-2)+(3q+2)(q-1).$$
It follows that $g(H^{\mathcal C})\leq 0$. The genus of any function field must be a non-negative integer. Hence,  $g(H^{\mathcal C})=0$ and all places of degree three of $H$ are unramified in $H/H^{\mathcal C}$. This proposition follows immediately.
\end{proof}

\begin{theorem}
\label{a002m}
Assume that $\text{char}(\mathbb{F}_{q^2})=2$. Let $m$ be a divisor of $q^2-1$ and let $b\in \mathbb{F}_{q^2}^*$ be an element of order $m$. Consider the group ${\mathcal G}=\langle \lambda, \omega \rangle$  which is generated by the automorphism $\lambda$ and $\omega$, where $$\lambda(x)=bx, \quad \lambda(y)=b^{q+1}y   \quad \text{and}  \quad \omega(x)=\frac{x}{y}, \quad \omega(y)=\frac{1}{y}. $$ Let $d=\gcd(m,q+1), \tilde{d}=\gcd(m,q-1)$.
Then the genus of the fixed field $H^{\mathcal G}$ is
$$g(H^{\mathcal G})=\frac{q^2-q+m-(d-1)(q-1)-\tilde{d}(q+1)}{4m}.$$
\end{theorem}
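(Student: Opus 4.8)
Here is the plan.

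The plan is to identify $\mathcal G$ inside the group $\mathcal C=\langle\e,\omega\rangle$ of Proposition \ref{2diff} and then apply the Hurwitz genus formula. Writing $\mathbb F_{q^2}^*=\langle a\rangle$ with $a$ as in Section 3, we have $b=a^k$ for some $k$, so $\lambda=\e^k$ and $\mathcal G=\langle\lambda,\omega\rangle\subseteq\mathcal C$; in particular $H^{\mathcal C}\subseteq H^{\mathcal G}\subseteq H$. Since $q$ is even, $q^2-1=(q-1)(q+1)$ is odd and $\gcd(q-1,q+1)=1$, so $m$ is odd and $m=\tilde d\,d$ with $\tilde d\mid q-1$ and $d\mid q+1$. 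From $\lambda\omega=\omega\lambda^{-q}$ it follows that every element of $\mathcal G$ is $\lambda^i$ or $\omega\lambda^i$ with $0\le i<m$, whence $|\mathcal G|=2m$. Moreover, since a place of $H$ ramified in $H/H^{\mathcal G}$ is a fortiori ramified in $H/H^{\mathcal C}$, Proposition \ref{2diff} tells us that the only places of $H$ that can ramify in $H/H^{\mathcal G}$ are $P_\infty$, $P_{0,0}$ and the $q-1$ places $P_{0,\b}$ with $\b\in\mathbb F_q^*$, and that all places of degree three are unramified in $H/H^{\mathcal G}$.

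Next I would compute the different exponents of these places. Since $\omega$ interchanges $P_\infty$ and $P_{0,0}$ while $\langle\lambda\rangle$ fixes each of them, the decomposition group of $P_\infty$ (resp. $P_{0,0}$) in $H/H^{\mathcal G}$ is $\langle\lambda\rangle$, of order $m$; as $m$ is odd these places are tamely ramified and $d(P_\infty)=d(P_{0,0})=m-1$. For $P_{0,\b}$ with $\b\in\mathbb F_q^*$, I would use the formulas $\lambda^i(P_{\a,\b})=P_{b^{-i}\a,\,b^{-i(q+1)}\b}$ and $\omega(P_{\a,\b})=P_{\a/\b,\,1/\b}$ (valid for $\b\neq0$) to show that $\lambda^i$ fixes $P_{0,\b}$ exactly when $(m/d)\mid i$ ($d$ values of $i$ modulo $m$), while $\omega\lambda^i$ fixes $P_{0,\b}$ exactly when $b^{i(q+1)}=\b^2$; the latter has a solution if and only if $\b$ lies in the subgroup $\langle b\rangle\cap\mathbb F_q^*$ of order $\tilde d$, and in that case there are again $d$ solutions, precisely one of which, say $i_0$, also satisfies $b^{i_0}=\b$. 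Taking $x$ as a uniformizer at $P_{0,\b}$ and using $v_{P_{0,\b}}(y-\b)=q+1$, a short computation gives $v_{P_{0,\b}}(\lambda^i(x)-x)=1$ for $i\not\equiv0\pmod m$, and $v_{P_{0,\b}}(\omega\lambda^i(x)-x)=q+2$ if $b^i=\b$ and $=1$ otherwise.

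From this the ramification filtration at $P_{0,\b}$ follows. If $\b\in\langle b\rangle\cap\mathbb F_q^*$ (there are $\tilde d$ such $\b$), then $|\mathcal G_0(P_{0,\b})|=2d$, the element $\omega\lambda^{i_0}$ has order $2$ because $m\mid i_0(q-1)$, and $\mathcal G_j(P_{0,\b})=\{1,\omega\lambda^{i_0}\}$ for $1\le j\le q+1$ while $\mathcal G_{q+2}(P_{0,\b})=\{1\}$; Hilbert's different formula then gives $d(P_{0,\b})=(2d-1)+(q+1)=2d+q$. If $\b\in\mathbb F_q^*\setminus\langle b\rangle$ (there are $q-1-\tilde d$ such $\b$), no $\omega\lambda^i$ fixes $P_{0,\b}$, so the decomposition group is $\langle\lambda^{m/d}\rangle$ of order $d$, which is tame since $d\mid q+1$ is odd, and $d(P_{0,\b})=d-1$. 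Substituting
$$\deg\text{Diff}(H/H^{\mathcal G})=2(m-1)+\tilde d(2d+q)+(q-1-\tilde d)(d-1)$$
together with $2g(H)-2=q^2-q-2$ and $|\mathcal G|=2m$ into
$$2g(H)-2=|\mathcal G|\bigl(2g(H^{\mathcal G})-2\bigr)+\deg\text{Diff}(H/H^{\mathcal G}),$$
and simplifying with the identity $\tilde d\,d=m$, one solves for $g(H^{\mathcal G})$ and recovers the stated value. The main difficulty is the bookkeeping for the wildly ramified places $P_{0,\b}$: correctly determining for which $\b$ the extra wild automorphism $\omega\lambda^{i_0}$ arises, verifying that it has order $2$ and generates every higher ramification group up to level $q+1$, and carrying the factorization $m=\tilde d\,d$ — which is special to even characteristic — consistently through the final arithmetic.
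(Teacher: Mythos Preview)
Your proposal is correct and follows essentially the same approach as the paper's proof: identify $\mathcal G$ as a subgroup of $\mathcal C$, use Proposition~\ref{2diff} to restrict the ramified places to $P_\infty$, $P_{0,0}$ and the $P_{0,\b}$ with $\b\in\mathbb F_q^*$, compute the decomposition and higher ramification groups at each, and plug into Hurwitz. The only cosmetic difference is in the bookkeeping at the places $P_{0,\b}$: the paper works with aggregate sums $\sum_{\b\in\mathbb F_q^*}|\mathcal G_0(P_{0,\b})|=d(q-1)+m$ and $\sum_\b N(\b)=m$, whereas you split explicitly into the $\tilde d$ values of $\b$ lying in $\langle b\rangle\cap\mathbb F_q^*$ (where $|\mathcal G_0|=2d$ and the wild contribution occurs) versus the remaining $q-1-\tilde d$ values (where $|\mathcal G_0|=d$); since $m=d\tilde d$ in even characteristic, the two computations of $\deg\mathrm{Diff}(H/H^{\mathcal G})$ agree.
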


\begin{proof}
The order of the group ${\mathcal G}$ is $2m$, and it consists of all automorphisms with the form
$$\sigma_c(x)=cx, \quad \sigma_c(y)=c^{q+1}y, \quad c^m=1$$ and
$$\tau_c(x)=\frac{cx}{y},  \quad \tau_c(y)=\frac{c^{q+1}}{y}, \quad  c^m=1.$$
We only need to calculate the different exponents for $P_{\infty}$ and $P_{0,\b}$ with $\b\in \mathbb{F}_q$ in the extension $H/H^{\mathcal G}$ from Proposition \ref{2diff}.

For the infinity place $P_{\infty}$, we have $\sigma(P_{\infty})=P_{\infty}    \Leftrightarrow  \sigma\in \mathcal{A}(P_{\infty}) \cap {\mathcal G}=\langle \lambda \rangle$. This implies that $|{\mathcal G}_0(P_{\infty})|=m$. Hence, the different exponent of $P_{\infty}$ is $d(P_{\infty})=m-1.$

For the place $P_{0,0}$, we also have $\sigma(P_{0,0})=P_{0,0}   \Leftrightarrow  \sigma\in \mathcal{A}(P_{\infty}) \cap {\mathcal G}=\langle \lambda \rangle$. 
Thus, $|{\mathcal G}_0(P_{0,0})|=m$ and $d(P_{0,0})=m-1.$

For a place $P_{0,\beta}$ with $\b^q+\b=0$ and $\b\neq 0$, we have
$$\sigma_c(P_{0,\beta})=P_{0,\beta}   \Leftrightarrow  c^{q+1}=1 \text{ and } c^m=1.$$ Then there are exactly $d=\gcd(m, q+1)$ automorphisms $\sigma_c$ such that $\sigma_c(P_{0,\beta})=P_{0,\beta}$. Furthermore,  $v_{P_{0,\b}}(\sigma_c(x)-x)=1$ for $c\neq 1$.
On the other hand,
$$\tau_c(P_{0,\beta})=P_{0,\beta}   \Leftrightarrow c^{q+1}=\beta^2, \quad \b^q+\b=0 \quad \text{and} \quad c^m=1.$$
For each fixed element $\b\in \mathbb{F}_q^*$, let $N(\b)$ denote the number of elements $c\in \mathbb{F}_{q^2}^*$ satisfy $c^{q+1}=\beta^2 \text{ and } c^m=1$. Then we obtain $\sum_{\b\in \mathbb{F}_q^*} N(\b)=m.$ 
Furthermore, we have
$$v_{P_{0,\b}}\Big{(}\tau_c(x)-x\Big{)} =v_{P_{0,\b}}\Big{(}\frac{cx}{y}-x\Big{)} =v_{P_{0,\b}}\Big{(}\frac{y-c}{y}x\Big{)} =\begin{cases}
    q+2  & \text{if } c=\b,\\
      1& \text{otherwise.}
\end{cases}$$
If $c=\b$, then $\b^{q-1}=1$ and $\b^{m}=1$. Then there are exactly $ \tilde{d}=\gcd(m,q-1)$ elements $\b\in \mathbb{F}_q^*$ such that the place $P_{0,\b}$ is wildly ramified in $H/H^{\mathcal G}$. For such a wildly ramified place $P_{0,\b}$, the orders of the higher ramification groups are $  |\mathcal{G}_i(P_{0,\beta}) |=2 \mbox{ for } 1\le i \le q+1 \mbox{ and }  |\mathcal{G}_{q+2}(P_{0,\beta}) |=1.$
Hence, $$\sum_{\b\in \mathbb{F}_q^*}|{\mathcal G}_0(P_{0,\beta})|=d(q-1)+m, \quad
\sum_{\b\in \mathbb{F}_q^*} (|{\mathcal G}_i(P_{0,\beta})|-1)=\tilde{d}$$
for each $1\le i\le q+1$ and $ |{\mathcal G}_{q+2}(P_{0,\beta})|=1$ for any $\b\in \mathbb{F}_q^*$.
Hence, the sum of different exponents for all the places $P_{0,\b}$ with $\b \in \mathbb{F}_q^*$ is
$$\sum_{\b\in \mathbb{F}_q^*}d(P_{0,\beta})=(d-1)(q-1)+m+\tilde{d}(q+1)$$
by Hilbert's Different Theorem.

For other places $P$, we have $d(P)=0.$ Hence, the degree of the different of $H/H^{\mathcal G}$ is $$\deg(\text{Diff}(H/H^{\mathcal G}))=m-1+m-1+(d-1)(q-1)+m+\tilde{d}(q+1).$$
The Hurwitz genus formula for $H/H^{\mathcal G}$ yields
$$q^2-q-2=2m\cdot [2g(H^{\mathcal G})-2]+\deg(\text{Diff}(H/H^{\mathcal G})).$$
Hence, this theorem follows immediately.
\end{proof}

\begin{corollary} Assume that $\text{char}(\mathbb{F}_{q^2})=2$  and $H$ is the Hermitian function field over the finite field $\mathbb{F}_{q^2}$.
\begin{itemize}
\item [(1)] For any divisor $m$ of $q-1$, there is a subfield $E\subseteq H$ of genus $$g(E)=\frac{q^2-q-mq}{4m}.$$
\item [(2)]  For any divisor $m$ of $q+1$, there is a subfield $E\subseteq H$ of genus $$g(E)=\frac{q^2-q-mq+2m-2}{4m}.$$
\end{itemize}
\end{corollary}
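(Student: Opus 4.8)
The plan is to obtain both parts as immediate specializations of Theorem~\ref{a002m}, the only real content being the evaluation of the two gcd's in characteristic $2$. In each case I would take $E$ to be the fixed field $H^{\mathcal G}$ of the group ${\mathcal G}=\langle \lambda,\omega\rangle$ attached, exactly as in Theorem~\ref{a002m}, to an element $b\in\mathbb{F}_{q^2}^*$ of order $m$; this is legitimate since a divisor of $q-1$ or of $q+1$ is in particular a divisor of $q^2-1$. Theorem~\ref{a002m} then expresses $g(E)$ through $d=\gcd(m,q+1)$ and $\tilde d=\gcd(m,q-1)$, and the single observation that drives everything is that in characteristic $2$ both $q-1$ and $q+1$ are odd, so any $m$ dividing one of them is automatically odd.

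For part (1), with $m\mid q-1$ I would note $\tilde d=\gcd(m,q-1)=m$, while $d=\gcd(m,q+1)=\gcd\bigl(m,(q+1)-(q-1)\bigr)=\gcd(m,2)=1$ because $m$ is odd. Substituting $d=1$ and $\tilde d=m$ into the formula of Theorem~\ref{a002m} annihilates the term $(d-1)(q-1)$ and collapses the numerator, giving
$$g(E)=\frac{q^2-q+m-m(q+1)}{4m}=\frac{q^2-q-mq}{4m}.$$
For part (2), with $m\mid q+1$ the roles of $d$ and $\tilde d$ switch: now $d=\gcd(m,q+1)=m$ and $\tilde d=\gcd(m,q-1)=\gcd\bigl(m,(q-1)-(q+1)\bigr)=\gcd(m,2)=1$, again because $m$ is odd. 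Plugging $d=m$, $\tilde d=1$ into the formula and expanding $(m-1)(q-1)=mq-m-q+1$ yields
$$g(E)=\frac{q^2-q+m-(mq-m-q+1)-(q+1)}{4m}=\frac{q^2-q-mq+2m-2}{4m}.$$

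The argument uses no new geometry: Proposition~\ref{2diff} and Theorem~\ref{a002m} have already carried out the ramification bookkeeping, and the corollary is just a readout of that genus formula on two arithmetically convenient families of $m$. The only point where one must be slightly careful — and it is hardly an obstacle — is precisely the parity remark: it is essential that $\text{char}(\mathbb{F}_{q^2})=2$ forces $q$ to be even, for otherwise $d$ (respectively $\tilde d$) could equal $2$ and the two displayed expressions would pick up correction terms. Given that remark the rest is routine substitution, so I expect no genuine difficulty.
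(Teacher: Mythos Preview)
Your proposal is correct and follows exactly the paper's own route: both parts are obtained by specializing Theorem~\ref{a002m}, computing $d$ and $\tilde d$ in each case, and substituting. If anything, you give more justification than the paper does, since the paper simply asserts the gcd values while you supply the parity argument explaining why $\gcd(m,q\pm1)=1$ in characteristic~$2$.
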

\begin{proof}
Let $\mathcal{G}$ be the group which is defined in Theorem \ref{a002m}. 
\begin{itemize}
\item [(1)] If $m|(q-1)$, then $d=\gcd(m,q+1)=1$ and $ \tilde{d}=\gcd(m,q-1)=m$. By Theorem \ref{a002m}, the genus of the fixed field $E=H^{\mathcal G}$ is $$g(E)=\frac{q^2-q-mq}{4m}.$$
\item [(2)] If $m|(q+1)$, then $d=\gcd(m,q+1)=m$ and $ \tilde{d}=\gcd(m,q-1)=1$. By Theorem \ref{a002m}, the genus of the fixed field $E=H^{\mathcal G}$ is $$g(E)=\frac{q^2-q-mq+2m-2}{4m}.$$
\end{itemize}
This completes the proof.
\end{proof}

\section{The fixed fields of subgroups of $\langle [1,0,c], \omega \rangle$}
In this section, we consider another group generated by the automorphisms $\omega$ and $\tau$, where $\tau$ is given by
$\tau(x)=x,\  \tau(y)=y+c$ for some $c\in \mathbb{F}_{q^2}$ satisfying $c^q+c=0.$
Let $\sigma=\tau \omega$, then $$\sigma(x)=\frac{x}{y+c}, \quad \sigma(y)=\frac{1}{y+c}.$$
The automorphism $\sigma^i$ can be given in the following form
$$\sigma^i(x)= \frac{x}{u_i y+v_i}, \quad \sigma^i(y)=  \frac{u_{i-1}y+v_{i-1}}{u_i y+v_i},$$
where $u_i, v_i$ satisfy the recursive relations $u_i=v_{i-1}$, $v_i=cv_{i-1}+u_{i-1}$ 
with the initial values $u_0=v_{-1}=0$ and $v_0=u_{-1}=1$.
Then it can be calculated that
$$v_n=\sum_{i=0}^{[\frac{n}{2}]}\binom{n-i}{i} c^{n-2i}.$$
However, it is difficult to find the order of $\sigma$ by using the above formula of $v_n$.
Hence, we rewrite the above recursive relations in the matrix representation $$\left(\begin{array}{c}v_i \\v_{i-1}\end{array}\right)=\left(\begin{array}{cc}c & 1 \\1 & 0\end{array}\right)  \left(\begin{array}{c}v_{i-1} \\v_{i-2}\end{array}\right).$$
Let $C$ be the matrix $\left(\begin{array}{cc}c & 1 \\1 & 0\end{array}\right).$
Then $$\left(\begin{array}{c}v_i \\v_{i-1}\end{array}\right)=C^i  \left(\begin{array}{c}v_{0} \\v_{-1}\end{array}\right)=C^i  \left(\begin{array}{c}1 \\0\end{array}\right).$$
The characteristic polynomial of the matrix $C$ is $$\det(xI_2-C)=\left|\begin{array}{cc}x-c & -1 \\-1 & x\end{array}\right|=x^2-cx-1.$$
Since the product of the two eigenvalues is $-1$, we may assume that the two eigenvalues are $x_1=\delta$ and $x_2=-\delta^{-1}$.
Then we have $$c=x_1+x_2=\delta-\delta^{-1}.$$ By the identity $c^q+c=0$, we have 
$$c^q+c=0 \Leftrightarrow \Big{(}\delta-\frac{1}{\delta}\Big{)}^q+\Big{(}\delta-\frac{1}{\delta}\Big{)}=0  \Leftrightarrow (\delta^{q+1}-1)(\delta^{q-1}+1)=0.$$
Hence, $\d^{q+1}=1$ or $\delta^{q-1}=-1.$

It is easy to calculate that the vector $\vec{p_1}=(\d,1)^T$ is an eigenvector of the eigenvalue $x_1=\d$ and the vector $\vec{p_2}=(-1,\d)^T$ is an eigenvector of the eigenvalue $x_2=-\d^{-1}$. If $x_1\neq x_2$, that is, $\d^2\neq -1$, then the matrix $P=\left(\begin{array}{cc}\delta & -1 \\1 & \delta\end{array}\right)$ is invertible and its inverse is given by
$$P^{-1}=\frac{1}{\delta^2+1}\left(\begin{array}{cc}\delta & 1 \\-1 & \delta\end{array}\right).$$
Hence, the matrix $C$ can be diagonalized to its eigenvalue matrix
$\Lambda=\left(\begin{array}{cc}\delta & 0 \\0 & - \delta^{-1} \end{array}\right)$ , that is,
$P^{-1}CP= \Lambda.$
Then
\begin{equation}
\label{ }\begin{split}
C^i &=(P\Lambda P^{-1})^i=P\Lambda^iP^{-1}\\
& = \frac{1}{\delta^2+1} \left(\begin{array}{cc}\delta & -1 \\1 & \delta\end{array}\right) \cdot \left(\begin{array}{cc}\delta & 0 \\0 & - \delta^{-1}\end{array}\right)^i \cdot \left(\begin{array}{cc}\delta & 1 \\-1 & \delta\end{array}\right)  \\
 &=\frac{1}{\delta^2+1} \left(\begin{array}{cc}\delta^{i+2}+(-\delta)^{-i}& \delta^{i+1}+(-\delta)^{-i+1} \\ \delta^{i+1}+(- \delta)^{-i+1}  & \delta^{i}+(- \delta)^{-i+2} \end{array}\right)
\end{split}
\end{equation}
Hence,  $v_i$ can also be given by the following formula $$v_i=\frac{\delta^{i+2}+(-\delta)^{-i}}{\delta^2+1}. $$
Then the order of the automorphism $\sigma$ can be determined as follows. 
Note that $$ \sigma^i=1 \Leftrightarrow \left(\begin{array}{c}v_i \\v_{i-1}\end{array}\right)=\left(\begin{array}{c}1 \\0\end{array}\right)  \Leftrightarrow  C^i \left(\begin{array}{c}1 \\0\end{array}\right)=\left(\begin{array}{c}1 \\0\end{array}\right) \Leftrightarrow  C^i=\left(\begin{array}{cc}1 & * \\0 & *\end{array}\right). $$
If $\delta^{i+1}+(-\delta)^{-i+1} =0$, that is,  $\d^{2i}=(-1)^i$, then $\delta^{i+2}+(-\delta)^{-i}=\delta^{i}+(- \delta)^{-i+2}$.
Hence,
$$\sigma^i=1 \Leftrightarrow C^i=I_2  \Leftrightarrow \left(\begin{array}{cc}\delta & 0 \\0 & -\delta^{-1}\end{array}\right)^i=I_2 \Leftrightarrow 
\d^i=1 \text{ and } (-1)^i=1.$$
If $\d^2=-1$, then the matrix $C$ cannot be diagonalized.

\subsection{Even characteristic}
In the even characteristic case, we have $c=\delta+\delta^{-1}$ and $$c^q+c=0 \Leftrightarrow (\delta^{q-1}-1)(\delta^{q+1}-1)=0  \Leftrightarrow \delta^{q-1}=1 \text{ or } \delta^{q+1}=1.$$
If $\d^2=-1$, then $\d=1$ and $c=0$. Hence, $\tau$ is the identity automorphism.
So we can assume $\delta$ as an element in $\mathbb{F}_{q^2}$ with an order $n$ which is  $q-1>1$ or $q+1$ in this subsection.

Now we need to determine the group structure of the group ${\mathcal D}$ generated by the two automorphisms $\tau$ and $\omega$.
Firstly, let us determine the order of the automorphism $\sigma=\tau \cdot \omega$. Note that
$$\sigma^i=1  \Leftrightarrow  v_{i-1}=0 \text{ and } v_{i}=1.$$ It has been showed that
$$v_i=\frac{1}{c}\Big{(}\delta^{i+1}+\frac{1}{\delta^{i+1}}\Big{)},$$
then $v_{i-1}=0  \Leftrightarrow \delta^i+\delta^{-i}=0 \Leftrightarrow \delta^{2i}=1 \Leftrightarrow \delta^{i}=1 \Leftrightarrow \text{ord}(\delta)|i$. Moreover, if $\text{ord}(\delta)|i$, then $v_i=1$. 
Hence, the order of the automorphism $\sigma$ is the same as the order of $\delta$.
Secondly, it can be directly verified that $\omega \sigma=\sigma^{n-1} \omega$. Hence,
$${\mathcal D}=\langle \tau,\omega \rangle=\langle \omega,\sigma| \omega^2=1, \sigma^n=1, \omega \sigma=\sigma^{n-1} \omega \rangle,$$
i.e., $\mathcal D$ is isomorphic to the Dihedral group $D_n$ of order $2n$.

Now we need to consider the ramification in the Galois extension $H/H^{\mathcal D}$.
Firstly, we consider the automorphism $\sigma^i$ for $1\le i \le n-1$,  then $v_{i-1}\neq 0$ and $\sigma^i(P_{\infty})\neq P_{\infty}$.
For any rational place $P_{\alpha,\beta}\in \mathbb{P}_H$,
$$\sigma(P_{\alpha,\beta})=P_{\alpha,\beta}  \Leftrightarrow
     \frac{\alpha}{v_{i-1}\beta+v_i}  =\alpha, \quad 
      \frac{v_{i-2}\beta+v_{i-1}}{v_{i-1}\beta+v_{i}}=\beta, \quad \beta^q+\beta=\alpha^{q+1}.$$
If $\alpha\neq 0$, then $v_{i-1}\beta+v_i=1$. Since $c^q+c=0$, we have $c\in \mathbb{F}_q$ and $v_i\in  \mathbb{F}_q$ which follows from the formula of $v_i$ in the variable $c$. Then
$\beta\in \mathbb{F}_q$
which contradicts to the third equation $\beta^q+\beta=\alpha^{q+1}\neq 0$.
Hence, $\alpha=0$ and $\b\in  \mathbb{F}_q$. 
From the second equation, $$v_{i-2}\b+v_{i-1}=v_{i-1} \beta^2 +v_i \beta  \Rightarrow v_{i-1}(\beta^2+c\beta+1) =0 \Rightarrow \beta^2+c\beta+1=0.$$
Then there are two roots $\beta=\delta$ and $\beta=\delta^{-1}$. \\
If $n=q-1$, then  $\d^q+\d=0$ and $(\d^{-1})^q+\d^{-1}=0$.
Hence, the two places $P_{0,\delta}$ and $P_{0,  \delta^{-1}}$ are stabilized by the automorphism $\sigma^i$.
Furthermore,
$$v_{P_{0,\delta}}\Big{(}\sigma^i(x)-x\Big{)} =v_{P_{0,\delta}}\Big{(}\frac{x}{v_{i-1}y+v_i}-x\Big{)}
=v_{P_{0,\delta}}\Big{(}\frac{v_{i-1}y+v_i+1}{v_{i-1}y+v_i} x\Big{)}=1,$$
since $v_{i-1}\delta+v_i+1=1+\delta^{-i}\neq 0$.\\
If $n=q+1$, then $\d^q+\d \neq 0$ and $(\d^{-1})^q+\d^{-1}\neq 0$ which contradict to $\a=0$ in the third equation. Hence, $P_{\a,\b}$ can't be stabilized by the automorphism $\s^i$ for any $1\le i \le q$.

Secondly, we consider the automorphism $\sigma^i \omega$ for $0\le i \le n-1$,
$$\sigma^i \omega(x) =\frac{x}{v_{i-2}y+v_{i-1}}, \quad \sigma^i \omega(y) =\frac{v_{i-1}y+v_{i}}{v_{i-2}y+v_{i-1}}.$$
The order of the automorphism $\sigma^i \omega$ is $2$, since $\omega \sigma=\sigma^{n-1} \omega$.
For $i=0$, we know $\omega(P_{0,0})=P_{\infty}$. If $\beta\neq 0$, then 
$$\omega(P_{\alpha,\beta})=P_{\alpha,\beta}   \Leftrightarrow  \frac{\alpha}{\beta}=\alpha, \quad \frac{1}{\beta}=\beta,  \quad \beta^q+\beta=\alpha^{q+1}.$$
Hence, $P_{0,1}$ is the unique rational place stabilized by the automorphism $\omega$.
Furthermore, $$v_{P_{0,1}}\Big{(}\omega(x)-x\Big{)}=v_{P_{0,1}}\Big{(}\frac{x}{y}-x\Big{)}=v_{P_{0,1}}\Big{(}\frac{y+1}{y}x\Big{)}=q+2.$$
For $i=1$, then $\sigma \omega=\tau$, i.e., $\tau(x)=x, \ \tau(y)=y+c$. Hence, $P_{\infty}$ is the unique rational place stabilized by the automorphism $\tau$ and $$v_{P_{\infty}}\Big{(}\tau\big{(}\frac{x}{y}\big{)}-\frac{x}{y}\Big{)}=v_{P_{\infty}}\Big{(}\frac{x}{y+c}-\frac{x}{y}\Big{)}=v_{P_{\infty}}\Big{(}\frac{cx}{y(y+c)}\Big{)}=q+2.$$
For $2\le i\le n-1$, then $v_{i-2}\neq 0$ and $\sigma^i \omega(P_{\infty})\neq P_{\infty}$. 
For the place $P_{\alpha,\beta}\in \mathbb{P}_H$,
$$\sigma^i \omega(P_{\alpha,\beta})=P_{\alpha,\beta} \Leftrightarrow
\frac{\alpha}{v_{i-2}\beta+v_{i-1}}=\alpha,  \quad  \frac{v_{i-1}\beta+v_{i}}{v_{i-2}\beta+v_{i-1}}=\beta,  \quad  \beta^q+\beta=\alpha^{q+1}.$$
If $\alpha\neq 0$, then $v_{i-2}\beta+v_{i-1}=1$. Since $c\in \mathbb{F}_q$, we know $v_{i-2}, v_{i-1}\in \mathbb{F}_q$. It follows that
$\beta\in \mathbb{F}_q$ which contradicts to the third equation $\beta^q+\beta=\alpha^{q+1}\neq 0$.
Hence, $\alpha=0$, $\beta \in  \mathbb{F}_q$ and $$v_{i-1}\beta+v_i=v_{i-2}\beta^2+v_{i-1}\beta \Leftrightarrow \beta^2=\frac{v_i}{v_{i-2}}.$$
Moreover, it is easy to verify that $v_i/v_{i-2}$ are pairwise distinct elements in $\mathbb{F}_q$ for $2\le i \le n-1$.
Hence, the place $P_{0,\beta_i}$ with $\beta_i=(\frac{v_i}{v_{i-2}})^{\frac{q}{2}}$ is the unique rational place stabilized by $\sigma^i \omega$. Furthermore,
 $$v_{P_{0,\beta_i}}\Big{(}\sigma^i \omega(x)-x\Big{)}  =v_{P_{0,\beta_i}}\Big{(}\frac{x}{v_{i-2}y+v_{i-1}}-x\Big{)} 
 =v_{P_{0,\beta_i}}\Big{(}\frac{v_{i-2}y+v_{i-1}+1}{v_{i-2}y+v_{i-1}}x\Big{)}=q+2.$$
The last equation holds true, since $v_{i-2}\beta_i+v_{i-1}+1=0  \Leftrightarrow v_{i-2}^2{\beta_i}^2+v_{i-1}^2+1=0  \Leftrightarrow v_{i-2}v_i+v_{i-1}^2=1$ which can be obtained from the formula of $v_i$ in the variable $\d$.

\begin{theorem}
\label{10c-}
Let $H$ be the Hermitian function field over $\mathbb{F}_{q^2}$ with even characteristic, 
let $m$ be a divisor of $q-1$ and let ${\mathcal G}$ be the subgroup $\langle \omega, \s^{\frac{q-1}{m}} \rangle $ of ${\mathcal D}$. Then the genus of the fixed field $H^{\mathcal G}$ is $$g(H^{\mathcal G})=\frac{q^2-q-mq}{4m}.$$
\end{theorem}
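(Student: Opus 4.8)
The plan is to apply the Hurwitz genus formula to the Galois extension $H/H^{\mathcal G}$, reading off the ramification data from the computations already carried out in this section for the elements $\sigma^i$ and $\sigma^i\omega$ of $\mathcal D$, now restricted to the $2m$ elements of $\mathcal G$. Write $n=q-1$ and $\rho=\sigma^{n/m}$; then $\rho$ has order $m$, and $\mathcal G=\langle\omega,\rho\rangle$ is the dihedral subgroup of $\mathcal D$ of order $2m$. I record at the outset that $m$ is odd, since $m\mid q-1$ and $q$ is a power of $2$. It then suffices to compute $d(P)$ for the finitely many ramified places $P$, and by the results recalled in the Preliminary only rational places, or places of degree three, can ramify.

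First I would handle the $m-1$ nontrivial rotations $\rho^j=\sigma^{jn/m}$, i.e.\ the powers $\sigma^i$ with $1\le i\le n-1$. By the analysis above in the case $n=q-1$, each such $\sigma^i$ stabilizes exactly the two rational places $P_{0,\delta}$ and $P_{0,\delta^{-1}}$, with $v_{P_{0,\delta}}(\sigma^i(x)-x)=1$ and likewise at $P_{0,\delta^{-1}}$. Hence the decomposition group of $P_{0,\delta}$ in $H/H^{\mathcal G}$ is $\langle\rho\rangle$, of order $m$; since $m$ is odd the ramification is tame and $d(P_{0,\delta})=e(P_{0,\delta})-1=m-1$, and the same holds at $P_{0,\delta^{-1}}$.

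Next I would handle the $m$ reflections $\rho^j\omega=\sigma^{jn/m}\omega$, each of order $2$. By the analysis above, $\omega$ fixes the unique rational place $P_{0,1}$, while for $i\ge 1$ the automorphism $\sigma^i\omega$ fixes a unique rational place $P$ — namely $P_\infty$ when $i=1$, and $P_{0,\beta_i}$ with $\beta_i^2=v_i/v_{i-2}$ when $i\ge 2$ — and in every case $v_P(\eta(t)-t)=q+2$ for a uniformizer $t$ at $P$, where $\eta$ is the reflection. Using the formula $v_i=c^{-1}(\delta^{i+1}+\delta^{-i-1})$, one checks that the $m$ fixed places obtained this way are pairwise distinct and distinct from $P_{0,\delta}$ and $P_{0,\delta^{-1}}$; for instance $\beta_i=\delta^{\pm1}$ would force $\delta^4=1$, hence $\delta=1$ and $c=0$, which is excluded. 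Therefore the decomposition group of each such $P$ equals $\{1,\eta\}$, of order $2=\text{char}(\mathbb F_{q^2})$, so the ramification is wild; from $v_P(\eta(t)-t)=q+2$ one gets $\mathcal G_i(P)=\{1,\eta\}$ for $0\le i\le q+1$ and $\mathcal G_{q+2}(P)=\{1\}$, whence $d(P)=q+2$ by Hilbert's Different Theorem. Finally, since $q^2-q+1$ is odd and $\gcd(q-1,q^2-q+1)=1$ we have $\gcd(2m,q^2-q+1)=1$, so no place of degree three ramifies and the list of ramified places is complete.

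Putting everything together, $\deg\text{Diff}(H/H^{\mathcal G})=2(m-1)+m(q+2)=mq+4m-2$, and the Hurwitz genus formula $q^2-q-2=2m\,[2g(H^{\mathcal G})-2]+\deg\text{Diff}(H/H^{\mathcal G})$ yields $g(H^{\mathcal G})=(q^2-q-mq)/(4m)$. I expect the real work to be the bookkeeping for the reflections: showing that the $m$ rational places they fix are genuinely distinct from one another and from $P_{0,\delta^{\pm1}}$, so that each of those decomposition groups has order exactly $2$ and contributes $q+2$ to the different rather than something larger; once this is in place everything else is a direct computation.
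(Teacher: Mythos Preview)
Your proof is correct and follows essentially the same route as the paper: you identify the two tame places $P_{0,\delta^{\pm1}}$ with different exponent $m-1$ and the $m$ wild places (each fixed by exactly one reflection) with different exponent $q+2$, then apply Hurwitz to obtain $q^2-q-2=2m\,[2g(H^{\mathcal G})-2]+2(m-1)+m(q+2)$. The bookkeeping you flag as the ``real work'' (distinctness of the $m$ reflection-fixed places and their disjointness from $P_{0,\delta^{\pm1}}$) is exactly what the paper also checks, and your verification $\beta_i=\delta^{\pm1}\Rightarrow\delta^4=1\Rightarrow\delta=1$ is correct since $q-1$ is odd.

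The only genuine difference is how degree-three places are handled. The paper first carries out the computation for the full group $\mathcal D$ of order $2(q-1)$, uses the Hurwitz formula as an \emph{inequality} to force $g(H^{\mathcal D})=0$, and concludes a posteriori that no degree-three place can have ramified in $H/H^{\mathcal D}$ (hence not in $H/H^{\mathcal G}$ either). You instead argue directly that any degree-three ramification index must divide both $|\mathcal G|=2m$ and $q^2-q+1$, and that $\gcd(2m,q^2-q+1)=1$ because $m\mid q-1$, $q^2-q+1\equiv 1\pmod{q-1}$, and $q^2-q+1$ is odd. Your argument is cleaner and self-contained for this theorem; the paper's approach has the side benefit of simultaneously establishing $g(H^{\mathcal D})=0$.
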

\begin{proof}
The ramification groups of $P_{\infty}$ in  $H/H^{\mathcal G}$ are given by $$\G_0(P_{\infty})=\G_1(P_{\infty})=\cdots =\G_{q+1}(P_{\infty})=\langle \tau \rangle \text{ and } \G_{q+2}(P_{\infty})=\{id\}.$$
Hence, the different exponent of $P_{\infty}$ in $H/H^{\mathcal G}$ is $$d(P_{\infty})=\sum_{i=0}^{+\infty} (|\G_i(P_{\infty})|-1)=q+2.$$
It is easy to verify that $v_i/v_{i-2}\neq \d^2 \text{ or } \d^{-2}$, that is to say, $P_{0,\b_i}\neq P_{0,\d} \text{ or } P_{0,\delta^{-1}}$ for $2\le i \le q-2$. Then the different exponents of $P_{0,1}$ and $P_{0,\b_i}$ for $2\le i\le q-2$ are also
$$d(P_{0,1})=d(P_{0,\b_i})=q+2.$$
The decomposition groups of $P_{0,\delta}$ and $P_{0,\delta^{-1}}$ are $\langle \sigma \rangle$ which is a group of order $q-1$. Hence, the different exponents of $P_{0,\delta}$ and $P_{0,\delta^{-1}}$ are $$d(P_{0,\delta}) =d(P_{0,\delta^{-1}})=q-2.$$
By the Hurwitz genus formula, we have
$$q^2-q-2\geq 2(q-1)[2g(H^{\mathcal D})-2]+2\cdot (q-2)+(q-1)\cdot (q+2).$$
Hence, $g(H^{\mathcal D})=0$ and all places of degree three of $H$ are unramified in $H/H^{\mathcal D}$. 
The order of the subgroup ${\mathcal G}$ is $2m$. Then this theorem follows from the Hurwitz genus formula
$$q^2-q-2=2m\cdot [2g(H^{\mathcal G})-2]+2\cdot (m-1)+m\cdot (q+2).$$
\end{proof}

\begin{theorem}
\label{10c+}
Let $H$ be the Hermitian function field over $\mathbb{F}_{q^2}$ with even characteristic,
let $m$ be a divisor of $q+1$ and let ${\mathcal G}$ be the subgroup $\langle \omega, \s^{\frac{q+1}{m}} \rangle$ of ${\mathcal D}$. Then the genus of the fixed field  $H^{\mathcal G}$ is   $$g(H^{\mathcal G})=\frac{q^2 - q - m q+2m-2}{4m}.$$
\end{theorem}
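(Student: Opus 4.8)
The plan is to follow the pattern of the proof of Theorem~\ref{10c-}, now in the branch $n=q+1$ of the analysis carried out above. Since $m\mid(q+1)$, pick $\delta\in\mathbb{F}_{q^{2}}^{*}$ of order $q+1$; then $c=\delta+\delta^{-1}\ne 0$ satisfies $c^{q}+c=0$, the automorphism $\sigma=\tau\omega$ has order $q+1$, and $\mathcal{D}=\langle\tau,\omega\rangle\cong D_{q+1}$ with $\omega\sigma=\sigma^{-1}\omega$. Put $k=(q+1)/m$; then $\mathcal{G}=\langle\omega,\sigma^{k}\rangle$ is dihedral of order $2m$, with rotations $\sigma^{jk}$ and reflections $\sigma^{jk}\omega$ for $0\le j\le m-1$, each reflection being an involution.

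The first step is to pin down $g(H^{\mathcal{D}})$. By the ramification discussion preceding Theorem~\ref{10c-}, when $n=q+1$ no nontrivial rotation $\sigma^{i}$ ($1\le i\le q$) stabilizes a rational place of $H$, whereas each of the $q+1$ reflections $\sigma^{i}\omega$ ($0\le i\le q$) stabilizes exactly one rational place — namely $P_{0,1}$ for $i=0$, $P_{\infty}$ for $i=1$, and $P_{0,\beta_{i}}$ with $\beta_{i}=(v_{i}/v_{i-2})^{q/2}$ for $2\le i\le q$ — and is wildly ramified there, with $v_{P}(\rho(t)-t)=q+2$ for the reflection $\rho$ and a uniformizer $t$ at $P$. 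Since each such place is stabilized in $\mathcal{D}$ only by its reflection and the identity, its decomposition group has order $2$ and Hilbert's Different Theorem gives different exponent $q+2$; hence $\deg\,\text{Diff}(H/H^{\mathcal{D}})\ge(q+1)(q+2)$, and the Hurwitz genus formula $q^{2}-q-2\ge 2(q+1)[2g(H^{\mathcal{D}})-2]+(q+1)(q+2)$ forces $g(H^{\mathcal{D}})=0$ and leaves no room for ramification at degree-three places. Consequently all degree-three places of $H$ are unramified in $H/H^{\mathcal{G}}$ as well, so $\text{Diff}(H/H^{\mathcal{G}})$ is supported on rational places.

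Next I would compute $\deg\,\text{Diff}(H/H^{\mathcal{G}})$ directly. The nontrivial rotations $\sigma^{jk}$ ($1\le j\le m-1$) fix no rational place, so contribute nothing. For $1\le j\le m-1$, since $k>1$ divides $q+1$ one has $jk\not\equiv 1\pmod{q+1}$, so $\sigma^{jk}\omega$ fixes the rational place $P_{0,\beta_{jk}}$, while $\omega=\sigma^{0}\omega$ fixes $P_{0,1}$; these $m$ places are pairwise distinct, because the indices $0,k,2k,\dots,(m-1)k$ are distinct modulo $q+1$ and none equals $1$, and the $\beta_{i}$ (including $\beta_{0}=1$) are pairwise distinct by the distinctness of the ratios $v_{i}/v_{i-2}$ established above. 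Each such place $P$ is stabilized in $\mathcal{G}$ only by its reflection $\rho$ (of order $2=\text{char}\,\mathbb{F}_{q^{2}}$) and the identity, and $v_{P}(\rho(t)-t)=q+2$ for a uniformizer $t$ at $P$; hence $\mathcal{G}_{0}(P)=\dots=\mathcal{G}_{q+1}(P)=\langle\rho\rangle$, $\mathcal{G}_{q+2}(P)=\{1\}$, and $d(P)=q+2$ by Hilbert's Different Theorem. Therefore $\deg\,\text{Diff}(H/H^{\mathcal{G}})=m(q+2)$. (In the boundary case $m=q+1$, so $\mathcal{G}=\mathcal{D}$, one of these $m$ places is $P_{\infty}$, fixed by $\tau=\sigma\omega\in\mathcal{G}$, contributing the same $q+2$; for $m<q+1$ one has $\tau\notin\mathcal{G}$ and $P_{\infty}$ is unramified in $H/H^{\mathcal{G}}$.) Feeding this into the Hurwitz genus formula $q^{2}-q-2=2m[2g(H^{\mathcal{G}})-2]+m(q+2)$ and solving for $g(H^{\mathcal{G}})$ yields $(q^{2}-q-mq+2m-2)/(4m)$.

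I expect the only genuinely delicate point to be the combinatorics in the third paragraph: checking that the $m$ reflections of $\mathcal{G}$ fix exactly $m$ distinct rational places, which rests on $jk\not\equiv 1\pmod{q+1}$ together with the already-established distinctness of the $v_{i}/v_{i-2}$. Once that is settled, each ramified rational place has decomposition group of order $2$ in $\mathcal{G}$, its ramification filtration is read off immediately from $v_{P}(\rho(t)-t)=q+2$, and the conclusion follows from the Hurwitz genus formula exactly as in Theorem~\ref{10c-}.
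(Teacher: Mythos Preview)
Your proof is correct and follows essentially the same route as the paper's: establish $g(H^{\mathcal D})=0$ from the Hurwitz inequality $q^2-q-2\ge 2(q+1)[2g(H^{\mathcal D})-2]+(q+1)(q+2)$ to rule out degree-three ramification, then feed $\deg\mathrm{Diff}(H/H^{\mathcal G})=m(q+2)$ into the Hurwitz formula for $H/H^{\mathcal G}$. The paper simply asserts this different degree, whereas you supply the justification (identifying the $m$ reflections of $\mathcal G$, their unique fixed rational places, their pairwise distinctness, and the filtration $\mathcal G_0=\cdots=\mathcal G_{q+1}=\langle\rho\rangle$); note incidentally that distinctness of the $m$ fixed places also follows immediately from the fact that no nontrivial rotation fixes a rational place, since two reflections with a common fixed point would produce such a rotation.
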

\begin{proof}
The different exponents of the rational places $P_{\infty} \text{ and } P_{0,\b}$ with $\b\in \mathbb{F}_q$ in the extension $H/H^{\mathcal D}$ are $$d(P_{\infty})=d(P_{0,\b})=q+2.$$
By the Hurwitz genus formula, we have
$$q^2-q-2\geq 2(q+1)[2g(H^{\mathcal D})-2]+(q+1)\cdot (q+2).$$
Hence, $g(H^{\mathcal D})=0$ and all places of degree three of $H$ are unramified in $H/H^{\mathcal D}$. 
The order of the subgroup ${\mathcal G}$ is $2m$. Then this theorem follows from the Hurwitz genus formula
$$q^2-q-2=2m\cdot [2g(H^{\mathcal G})-2]+m\cdot (q+2).$$
\end{proof}

In this sub-subsection,  
we choose $\d$ with the maximal order $q-1$ or $q+1$ satisfying $c^q+c=0$. However, the order of $\d$ may just be a positive divisor of $q-1$ or $q+1$. Here are some examples, such as the order of $\d$ is $3$ which divides $q-1$ in Example \ref{101eo} and the order of $\d$ is $5$ in Example \ref{10c5}. Since the calculations are similar to the Theorem \ref{10c-} and \ref{10c+}, we omit the details.

\begin{example}
\label{101eo}
Let ${\mathcal G}$ be the group generated by the automorphisms $\tau$ and $\omega$ which are given by
$\tau(x)=x,  \ \tau(y)=y+1 \text{ and } \omega(x)=x/y,  \ \omega(y)=1/y.$
If $q=2^{2k}$, then the genus of the fixed subfield $H^{\mathcal G}$ is $$g(H^{\mathcal G})=\frac{q^2-4q}{12}.$$
\end{example}

\begin{example}
\label{10c5}
Let ${\mathcal G}$ be the group generated by the automorphisms $\omega$ and $\tau$ which is given by $\tau(x)=x,  \ \tau(y)=y+c$  with $c^2+c+1=0$ and $c^q+c=0$.
If $q=4^{2k}$, then the genus of the fixed subfield $H^{\mathcal G}$ is $$g(H^{\mathcal G})=\frac{q^2-6q}{20}.$$
Otherwise, the genus of the fixed subfield $H^{\mathcal G}$ is $$g(H^{\mathcal G})=\frac{q^2-6q+8}{20}.$$
\end{example}

\subsection{Odd characteristic}
In the odd characteristic case, we obtain $c=\delta-\delta^{-1}$ and 
$$c^q+c=0 \Leftrightarrow (\delta^{q+1}-1)(\delta^{q-1}+1)=0 \Leftrightarrow \delta^{q+1}=1 \text{ or } \delta^{q-1}=-1.$$
Hence,  we can fix $\delta$ as an element in $\mathbb{F}_{q^2}$ with an order  $n$ which is $q+1$ or  $2(q-1)$ in this subsection.
If $\d^2=-1$, then $\text{ord}(\d)=4.$ Hence, we also assume that $q\neq 3$ in this subsection for simplicity.

The automorphism $\sigma^i$ is given by
$$\sigma^i(x)=\frac{x}{v_{i-1}y+v_i},  \quad \sigma^i(y)=\frac{v_{i-2}y+v_{i-1}}{v_{i-1}y+v_{i}}.$$
We have shown that $\s^i=1 \Leftrightarrow \d^i=1 \text{ and } (-1)^i=1$. Hence, the order of the automorphism $\sigma$ is the same as the order of $\d$.
Let ${\mathcal D}$ be the group generated by the automorphism $\sigma$. Then the Galois extension $H/H^{\mathcal  D}$ is tamely ramified.

\subsubsection{$\text{ord}(\d)=q+1$} The order the automorphism $\sigma$ is $q+1$. We assume that $3\nmid (q+1)$ in this sub-subsection. 
As $\gcd(q+1, q^2-q+1)=\gcd(q+1, 3)=1$, then all places of degree $3$ in $H$ are unramified in $H/H^{\mathcal D}$.

For the infinite place $P_{\infty}$ and $1\le i \le q$, 
$$\s^i(P_{\infty})=P_{\infty}  \Leftrightarrow v_{i-1}=0   \Leftrightarrow  \delta^{i+1}+(-1)^{i-1}\delta^{-i+1}=0 \Leftrightarrow \delta^{2i}=(-1)^i.$$
If $i$ is even, then 
$\delta^{2i}=1 \Leftrightarrow 2i=q+1 \Leftrightarrow i=(q+1)/2 \text{ is even } \Leftrightarrow q\equiv 3(\text{mod} 4).$
Otherwise,
$\delta^{2i}=-1  \Leftrightarrow 2i=(q+1)/2 \text{ or } 3(q+1)/2  \Leftrightarrow i=(q+1)/4 \text{ or } 3(q+1)/4 \text{ is odd } 
 \Leftrightarrow q\equiv 3(\text{mod} 8).$
 
For any rational place $P_{\a,\b}\in \mathbb{P}_H$ and $1\le i\le q$,
$$\sigma^i(P_{\a,\b})=P_{\a,\b} \Leftrightarrow \frac{\a}{v_{i-1}\b+v_i}=\a,  \quad \frac{v_{i-2}\b+v_{i-1}}{v_{i-1}\b+v_{i}}=\b,  \quad \b^q+\b=\a^{q+1}.$$
{\bf Case 1:} $\a\neq 0$.\\
From the first equation, we have $v_{i-1}\b+v_i=1$. If $v_{i-1}=0$, then $v_i=1$. It follows that $\text{ord}(\sigma)|i$ which is impossible for $1\le i\le q$. 
Hence, $v_{i-1}\neq 0$ and this implies that $v_{i-2}\b+v_{i-1}=v_{i-1}\b^2+v_i\b  \Leftrightarrow v_{i-1}(\b^2+c\b-1)=0   \Leftrightarrow \b^2+c\b-1=0.$
There are two solutions $\b=-\delta \text{ or } \b=\delta^{-1}.$\\
Substitute $\beta=-\delta$ into the first equation, then we have
$-v_{i-1}\delta+v_i=(-1)^i \delta^{-i}.$
If $q\equiv 1(\text{mod} 4)$, then $-v_{i-1}\delta+v_i=1  \Leftrightarrow   \delta^i=(-1)^i   \Leftrightarrow  i=(q+1)/2.$
Furthermore, $\a^{q+1}=(-\d)^q+(-\d)=-(\d^q+\d)\neq 0$.
Hence, the $q+1$ places $P_{\a,-\d}$ with $\a^{q+1}=-(\d^q+\d)$ are stabilized by the automorphism $\sigma^{\frac{q+1}{2}}$. 
If $q\equiv 3(\text{mod} 4)$, then $-v_{i-1}\delta+v_i=(-1)^i \delta^{-i}\neq 1$.
Substitute $\beta=\delta^{-1}$ into the first equation similarly, then we have $v_{i-1}\delta^{-1}+v_i=\delta^{i}\neq 1.$ Hence, the place $P_{\a,\delta^{-1}}$ can't be stabilized by the automorphism $\sigma^i$.\\
{\bf Case 2:} $\a= 0$.\\
In this case, we have $\b^q+\b=0$ and $v_{i-2}\b+v_{i-1}=v_{i-1}\b^2+v_i\b  \Leftrightarrow v_{i-1}(\b^2+c\b-1)=0.$
If $v_{i-1}=0,$ then $\sigma^i(P_{0,\b})=P_{0,\b}$ for any $\b$ with $\b^q+\b=0$. \\
If $v_{i-1}\neq 0$, then $\b^2+c\b-1=0$. Hence, $\b=-\delta$ or $\b=\delta^{-1}$.
Furthermore,
$(-\delta)^q+(-\delta)=-\delta^q-\delta\neq 0$  and $(\delta^{-1})^q+\delta^{-1}\neq 0$
if $\delta^4\neq 1$, that is, $q\neq 3$. Hence, the place $P_{0,\b}$ can't be stabilized by the automorphism $\sigma^i$ with $v_{i-1}\neq 0$.

If $q\equiv 1(\text{mod } 4)$, then $v_{i-1}\neq 0$ for every $1\le i\le q$. Hence, $$\text{Diff}(H/H^{\mathcal D})=\sum_{\a^{q+1}=-\d^q-\d} P_{\a,\d}.$$
If $q\equiv 7(\text{mod } 8)$, then $v_{\frac{q+1}{2}-1}= 0$. Then the places $P_{\infty}$  and $P_{0,\b}$ with $\b^q+\b=0$ are stabilized by the automorphism $\sigma^{\frac{q+1}{2}}$. Hence, $$\text{Diff}(H/H^{\mathcal D})=P_{\infty}+\sum_{\b^q+\b=0} P_{0,\b}.$$
If $q\equiv 3(\text{mod } 8)$, then $v_{\frac{q+1}{4}-1}, v_{\frac{q+1}{2}-1}$ and $v_{\frac{3(q+1)}{4}-1}$ are equal to $0$. Then the  places  $P_{\infty}$ and $P_{0,\b}$ with $\b^q+\b=0$ are stabilized by the automorphisms $\sigma^{\frac{q+1}{4}}, \sigma^{\frac{q+1}{2}}$ and $\sigma^{\frac{3(q+1)}{4}}$. Hence, $$\text{Diff}(H/H^{\mathcal D})=3P_{\infty}+3\sum_{\b^q+\b=0} P_{0,\b}.$$

\begin{theorem}
\label{o10c+}
Let $H$ be the Hermitian function field over $\mathbb{F}_{q^2}$ with odd characteristic. Assume that $3\nmid (q+1)$. 
Let $m$ be a positive divisor of $q+1$ and  let ${\mathcal G}$ be the group generated by the automorphism $\sigma^{\frac{q+1}{m}}$. Then the genus of the fixed field $H^{\mathcal G}$ is
$$g(H^{\mathcal G})=
\begin{cases}
  1+ (q^2-q-2)/2m & \text{if } m \text{ is odd}, \\
   1+ (q^2-4q-5)/2m & \text{if } 4|m \text{ and } q\equiv 3(\text{mod } 8), \\
   1+ (q^2-2q-3)/2m & \text{otherwise}.
\end{cases}$$
\end{theorem}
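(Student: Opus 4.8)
The plan is to obtain $g(H^{\mathcal{G}})$ from the Hurwitz genus formula
$$q^2-q-2 \;=\; 2g(H)-2 \;=\; m\bigl(2g(H^{\mathcal{G}})-2\bigr)+\deg(\text{Diff}(H/H^{\mathcal{G}})),$$
so the whole problem reduces to computing the different of $H/H^{\mathcal{G}}$, and this I would do by descending from the extension $H/H^{\mathcal{D}}$, whose ramification has just been determined. Since $\mathcal{D}=\langle\sigma\rangle$ is cyclic of order $q+1$ and $m\mid q+1$, the group $\mathcal{G}=\langle\sigma^{(q+1)/m}\rangle$ is exactly the unique subgroup of $\mathcal{D}$ of order $m$. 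Because $\gcd(q+1,q^2-q+1)=1$ all places of degree $3$ are unramified in $H/H^{\mathcal{D}}$, and since every ramified place is rational or of degree $3$, the only ramified places of $H/H^{\mathcal{D}}$ are the $q+1$ rational places exhibited above; consequently $H/H^{\mathcal{G}}$ is tamely ramified and its ramified places form a subset of that same list. For such a rational place $P$, let $\mathcal{D}_P$ be its stabiliser in $\mathcal{D}$; then its stabiliser in $\mathcal{G}$ is $\mathcal{G}\cap\mathcal{D}_P$, which (both factors lying in the cyclic group $\mathcal{D}$) has order $\gcd(m,|\mathcal{D}_P|)$. Since the residue field extensions at rational places are trivial, the ramification index of $P$ in $H/H^{\mathcal{G}}$ equals $|\mathcal{G}\cap\mathcal{D}_P|$, and by tameness $d(P)=|\mathcal{G}\cap\mathcal{D}_P|-1=\gcd(m,|\mathcal{D}_P|)-1$.

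Next I would run through the three cases already isolated. If $q\equiv1\pmod{4}$ or $q\equiv7\pmod{8}$, each of the $q+1$ ramified places of $H/H^{\mathcal{D}}$ has $\mathcal{D}_P=\langle\sigma^{(q+1)/2}\rangle$, of order $2$, so $d(P)=\gcd(m,2)-1$; hence $\deg(\text{Diff}(H/H^{\mathcal{G}}))$ equals $0$ if $m$ is odd and $q+1$ if $m$ is even. If $q\equiv3\pmod{8}$, each of the $q+1$ ramified places has $\mathcal{D}_P=\langle\sigma^{(q+1)/4}\rangle$, of order $4$, so $d(P)=\gcd(m,4)-1$; hence $\deg(\text{Diff}(H/H^{\mathcal{G}}))$ equals $0$, $q+1$, or $3(q+1)$ according as $m$ is odd, $m\equiv2\pmod{4}$, or $4\mid m$. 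Feeding each of these into the Hurwitz formula above and solving gives $g(H^{\mathcal{G}})=1+(q^2-q-2)/(2m)$ when the different degree is $0$, $g(H^{\mathcal{G}})=1+(q^2-2q-3)/(2m)$ when it is $q+1$, and $g(H^{\mathcal{G}})=1+(q^2-4q-5)/(2m)$ when it is $3(q+1)$.

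It remains to check that this trichotomy coincides with the one in the statement, and this is the only place where care is needed, namely in aligning the split on $q\bmod 8$ with the split on $m$. The different-free case occurs exactly when $m$ is odd, yielding the first branch; degree $3(q+1)$ occurs exactly when $4\mid m$ and $q\equiv3\pmod{8}$, yielding the second; and every remaining possibility produces degree $q+1$, which is the ``otherwise'' branch. Here one notes that $q\equiv1\pmod{4}$ forces $4\nmid q+1$ and hence $4\nmid m$, while if $q\equiv7\pmod{8}$ and $4\mid m$ the stabiliser $\mathcal{D}_P$ still has order only $2$, so one stays in the degree $q+1$ case. A routine verification that each of the three displayed genera is a non-negative integer (using $m\mid q+1$ together with the parity of $m$ in the relevant branch, e.g. $(q+1)/m$ is even when $m$ is odd) then completes the proof. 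I do not expect a genuine obstacle: all of the delicate ramification analysis has already been carried out for $H/H^{\mathcal{D}}$, and what is left is the elementary passage to the subgroup $\mathcal{G}$ and the matching of cases.
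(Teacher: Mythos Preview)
Your proposal is correct and follows essentially the same route as the paper: reduce to computing $\deg(\mathrm{Diff}(H/H^{\mathcal{G}}))$ by restricting the already-computed ramification data of $H/H^{\mathcal{D}}$ to the subgroup $\mathcal{G}$, then apply Hurwitz. Your formulation via $|\mathcal{G}\cap\mathcal{D}_P|=\gcd(m,|\mathcal{D}_P|)$ is a slightly cleaner packaging of the paper's explicit check of when $\sigma^{(q+1)/2}$ (resp.\ $\sigma^{(q+1)/4},\sigma^{3(q+1)/4}$) lies in $\mathcal{G}$, but the substance is identical.
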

\begin{proof}
It is easy to check that
$\sigma^{\frac{q+1}{2}} \in {\mathcal G} \Leftrightarrow m$ is even.
If $q\equiv 1 (\text{mod } 4)$ or  $q\equiv 7 (\text{mod } 8)$, then $$q^2-q-2=m[2g(H^{\mathcal G})-2]+\begin{cases}
    q+1  &    \text{if m is even}, \\
     0 &  \text{if m is odd}.
\end{cases}$$
If $q\equiv 3 (\text{mod } 8)$, then the automorphisms $\sigma^{\frac{q+1}{4}}, \sigma^{\frac{3(q+1)}{4}}\in {\mathcal G} \Leftrightarrow 4|m.$
This theorem follows from the Hurwitz genus formula,
$$q^2-q-2=m[2g(H^{\mathcal G})-2]+\begin{cases}
 0 &  \text{if m is odd},\\
   3(q+1)  &   \text{if } 4|m, \\
   q+1  &    \text{otherwise}.
\end{cases}$$
\end{proof}

\begin{remark}
We assume that $3\nmid (q+1)$ in Theorem \ref{o10c+}, since all places of degree $3$ of $H$ are unramified in  the extension  $H/H^{\mathcal G}$ under this assumption. In fact, we can't determine whether the places of degree $3$ of $H$ are ramified  in $H/H^{\mathcal G}$ or not  in the case of $ 3 | (q+1)$. 
The similar case occurs in Theorem \ref{oa0c+} as well.
\end{remark}

\subsubsection{$\text{ord}(\delta)=2(q-1)$}
If $\text{ord}(\delta)=2(q-1)$, then the order of the automorphism $\sigma$ is $2(q-1).$ 
As $\gcd(2q-2, q^2-q+1)=1$, all places of degree $3$ of $H$ are unramified in $H/H^{\mathcal D}$.

For the infinite place $P_{\infty}$ and $1\le i\le 2q-3$,,
$$\s^i(P_{\infty})=P_{\infty}  \Leftrightarrow v_{i-1}=0   \Leftrightarrow  \delta^{i+1}+(-1)^{i-1}\delta^{-i+1}=0 \Leftrightarrow \delta^{2i}=(-1)^i.$$
If $i$ is even, then $\delta^{2i}=1 \Leftrightarrow i=q-1$. 
If $i$ is odd, then
$\delta^{2i}=-1  \Leftrightarrow 2i=q-1 \text{ or } 3(q-1)
 \Leftrightarrow i=(q-1)/2 \text{ or } 3(q-1)/2 \text{ is odd } 
 \Leftrightarrow q\equiv 3(\text{mod} 4).$

For the place $P_{\a,\b}$ and $1\le i\le 2q-3$,
$$\sigma^i(P_{\a,\b})=P_{\a,\b} \Leftrightarrow \frac{\a}{v_{i-1}\b+v_i}=\a, \quad \frac{v_{i-2}\b+v_{i-1}}{v_{i-1}\b+v_{i}}=\b, \quad \b^q+\b=\a^{q+1}.$$
If $\alpha\neq 0$, then $v_{i-1}\b+v_i=1$. Suppose that $v_{i-1}=0$, then $v_i=1$. It follows that $\text{ord}(\sigma)|i$ which is impossible for $1\le i\le 2q-3$. Hence, 
 $v_{i-1}\neq 0$ and $\b^2+c\b-1=0$. There are two solutions $\b=-\delta$ or $\b=\delta^{-1}$.
It is easy to check that
$v_{i-1}\b+v_i=-v_{i-1}\d+v_i=(-1)^i \delta^{-i}\neq 1,$
since $\delta^{q-1}=-1\neq (-1)^{q-1}.$ Moreover, $v_{i-1}\delta^{-1}+v_i=\delta^i\neq 1.$
Hence, $\sigma^i(P_{\a,\b})\neq P_{\a,\b} \text{ for } \a\neq 0.$\\
Otherwise, $\b^q+\b=0$ and $$v_{i-2}\b+v_{i-1}=v_{i-1}\b^2+v_i\b  \Leftrightarrow v_{i-1}(\b^2+c\b-1)=0.$$
If $v_{i-1}=0$, then the places $P_{0,\b}$ with $\b^q+\b=0$ are stabilized by $\sigma^i$. 
Otherwise, $\b^2+c\b-1=0$ has two solutions $\b=-\delta$ of $\b=\delta^{-1}.$
It is easy to check that $(-\delta)^q+(-\delta)=-(\delta^q+\delta)=0 \text{ and } (\delta^{-1})^q+(\delta^{-1})=0,$ since $\delta^{q-1}=-1.$
Hence, the two places $P_{0,-\delta}$ and $P_{0,\delta^{-1}}$ are stabilized by the automorphism $\sigma^i$ with $v_{i-1}\neq 0$.

If $q\equiv 1(\text{mod } 4)$, then the places $P_{\infty}$ and $P_{0,\b}$ with $\b^q+\b=0$ are stabilized by the automorphism $\sigma^{q-1}$. Hence, the different of $H/H^{\mathcal D}$ is $$\text{Diff}(H/H^{\mathcal D})=P_{\infty}+\sum_{\b^q+\b=0} P_{0,\b}+(2q-4)(P_{0,-\d}+P_{0,\delta^{-1}}).$$
If $q\equiv 3(\text{mod } 4)$,  then the places  $P_{\infty}$ and $P_{0,\b}$  with $\b^q+\b=0$ are stabilized by the automorphism $\sigma^{\frac{q-1}{2}}$, $\sigma^{q-1}$ and $\sigma^{\frac{3(q-1)}{2}}.$
Hence, the different of $H/H^{\mathcal D}$ is $$\text{Diff}(H/H^{\mathcal D})=3P_{\infty}+3\sum_{\b^q+\b=0} P_{0,\b}+(2q-6)(P_{0,-\d}+P_{0,\delta^{-1}}).$$

\begin{theorem}
\label{o10c-}
Let $H$ be the Hermitian function field over $\mathbb{F}_{q^2}$ with odd characteristic, let $m$ be a positive divisor of $2(q-1)$ and  let 
${\mathcal G}$ be the group generated by the automorphism $\sigma^{\frac{2(q-1)}{m}}$. Then the genus of the fixed field 
$H^{\mathcal G}$ is
$$g(H^{\mathcal G})=\begin{cases}
   (q^2-q)/2m & \text{if } m \text{ is odd}, \\
    (q^2-4q+3)/2m & \text{if } 4|m \text{ and } q\equiv 3(\text{mod } 4), \\
   (q^2-2q+1)/2m &   \text{otherwise}.
\end{cases}$$
\end{theorem}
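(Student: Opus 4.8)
The plan is to obtain $g(H^{\mathcal G})$ directly from the Hurwitz genus formula applied to $H/H^{\mathcal G}$, using the stabiliser computations for the powers $\s^i$ of the generator of ${\mathcal D}=\langle\s\rangle$ carried out just before the statement. Since $\s$ has order $2(q-1)$, the element $\s^{2(q-1)/m}$ has order $m$, so ${\mathcal G}$ is cyclic of order $m$. Writing $p=\text{char}(\mathbb F_{q^2})$, we have $p\mid q$, hence $p\nmid 2(q-1)$ and in particular $p\nmid m$; thus $H/H^{\mathcal G}$ is tamely ramified and $d(P)=e(P)-1$ at every place $P$. Moreover every place of degree $3$ of $H$ is unramified in $H/H^{\mathcal D}$, hence also in the subextension $H/H^{\mathcal G}$, so the different is supported on rational places and $\deg(\text{Diff}(H/H^{\mathcal G}))=\sum_{1\ne\t\in{\mathcal G}}N(\t)$. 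It therefore suffices to count the rational places fixed by each nonidentity element of ${\mathcal G}$.

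From the analysis preceding the theorem, for $1\le i\le 2q-3$ the automorphism $\s^i$ fixes $P_\infty$ exactly when $v_{i-1}=0$, equivalently $\d^{2i}=(-1)^i$; in that case $\s^i$ also fixes the $q$ places $P_{0,\b}$ with $\b^q+\b=0$ and no place with $\a\ne 0$, so $N(\s^i)=q+1$. If instead $v_{i-1}\ne 0$, then $\s^i$ fixes exactly the two places $P_{0,-\d}$ and $P_{0,\d^{-1}}$ (distinct because $q\ne 3$), so $N(\s^i)=2$. Let $S\subseteq\{1,\dots,2q-3\}$ be the set of exponents $i$ with $v_{i-1}=0$; the same analysis gives $S=\{q-1\}$ when $q\equiv 1\pmod 4$ and $S=\{(q-1)/2,\,q-1,\,3(q-1)/2\}$ when $q\equiv 3\pmod 4$. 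Setting $s=\#\{\,j : 1\le j\le m-1,\ 2(q-1)j/m\in S\,\}$, we obtain
$$\deg\big(\text{Diff}(H/H^{\mathcal G})\big)=s(q+1)+(m-1-s)\cdot 2=2(m-1)+s(q-1).$$

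The only real computation is the evaluation of $s$, and I expect it to be the one delicate point. One checks that $q-1$ is an integer multiple of $2(q-1)/m$ iff $m$ is even, while each of $(q-1)/2$ and $3(q-1)/2$ is an integer multiple of $2(q-1)/m$ iff $4\mid m$. Intersecting these divisibility conditions with the two possible forms of $S$ yields $s=0$ when $m$ is odd, $s=3$ when $4\mid m$ and $q\equiv 3\pmod 4$, and $s=1$ in every other case; the care needed is precisely in keeping the condition $4\mid m$ separate from the congruence $q\equiv 3\pmod 4$, since both are required for $s=3$.

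Finally, substituting $\deg(\text{Diff}(H/H^{\mathcal G}))=2(m-1)+s(q-1)$ into the Hurwitz genus formula
$$q^2-q-2=m\big[2g(H^{\mathcal G})-2\big]+2(m-1)+s(q-1)$$
gives $2m\,g(H^{\mathcal G})=q^2-q-s(q-1)=(q-1)(q-s)$, so $g(H^{\mathcal G})=(q-1)(q-s)/(2m)$. Specialising $s$ to $0$, $1$, $3$ recovers the three claimed values $(q^2-q)/(2m)$, $(q^2-2q+1)/(2m)$, and $(q^2-4q+3)/(2m)$ respectively.
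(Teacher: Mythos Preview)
Your proof is correct and follows essentially the same approach as the paper: both rely on the stabiliser analysis of the $\s^i$ carried out before the statement, determine which of the special exponents in $S$ lie in $\mathcal G$ via the divisibility conditions on $m$, and then apply the Hurwitz genus formula. Your introduction of the parameter $s$ and the unified expression $\deg(\mathrm{Diff})=2(m-1)+s(q-1)$ is a slightly cleaner packaging than the paper's case-by-case listing of different degrees, but the underlying argument is identical.
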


\begin{proof}
It is easy to check that
$\sigma^{q-1} \in {\mathcal G} \Leftrightarrow m$ is even.
If $q\equiv 1 (\text{mod } 4)$, then $$q^2-q-2=m[2g(H^{\mathcal G})-2]+\begin{cases}
    q+1+2(m-2)  &    \text{if } m \text{ is even}, \\
     2(m-1) &  \text{if } m \text{ is odd}.
\end{cases}$$
If $q\equiv 3 (\text{mod } 4)$, then the automorphisms $\sigma^{\frac{q-1}{2}}, \sigma^{\frac{3(q-1)}{2}}\in {\mathcal G} \Leftrightarrow 4|m.$
This theorem follows from the Hurwitz genus formula,
$$q^2-q-2=m[2g(H^{\mathcal G})-2]+\begin{cases}
 2(m-1) &  \text{if } m \mbox{ is odd},\\
  3(q+1)+2(m-4)  &   \text{if } 4|m, \\
   q+1+2(m-2)  &    \text{otherwise}.
\end{cases}$$
\end{proof}

\section{The fixed subfields of subgroups of $\langle [a,0,c], \omega \rangle$}
Let $\tau$ be an automorphism of the Hermitian function field $H$ over $\mathbb{F}_{q^2}$ with the form
$$\tau(x)=a x, \quad \tau(y)=a^{q+1}y+c$$ where $c^q+c=0$ and $a$ is a  $(q^2-1)$-th primitive element of the finite field $\mathbb{F}_{q^2}$.
Let $\sigma=\tau \omega$, then $$\sigma(x)=\frac{ax}{a^{q+1}y+c},  \quad \sigma(y)=\frac{1}{a^{q+1}y+c}.$$
The automorphism $\sigma^i$ can be given in the following form
$$\sigma^i(x)= \frac{a^ix}{u_iy+v_i},  \quad \sigma^i(y)=  \frac{u_{i-1}y+v_{i-1}}{u_iy+v_i}$$
where $u_i, v_i$ satisfy the recursive relations $u_i=a^{q+1}v_{i-1}$, $v_i=cv_{i-1}+u_{i-1}$ with the initial values $u_0=v_{-1}=0,\ v_0=1$.
The above recursive relations can be rewritten in the matrix representation $$\left(\begin{array}{c}v_i \\v_{i-1}\end{array}\right)=\left(\begin{array}{cc}c & a^{q+1} \\1 & 0\end{array}\right)  \left(\begin{array}{c}v_{i-1} \\v_{i-2}\end{array}\right).$$
Let $C=\left(\begin{array}{cc}c & a^{q+1} \\1 & 0\end{array}\right).$
Then the characteristic polynomial of the matrix $C$ is
$$\det(xI_2-C)=\left|\begin{array}{cc}x-c & -a^{q+1} \\-1 & x\end{array}\right|=x^2-cx-a^{q+1}.$$
Assume that the two eigenvalues are $x_1=\delta$ and $x_2=-\frac{a^{q+1}}{\delta}$. Then we have
$$c=\delta-\frac{a^{q+1}}{\delta}.$$
By the identity $c^q+c=0$, we have 
$$c^q+c=0 \Leftrightarrow  (\delta-\frac{a^{q+1}}{\delta})^q+(\delta-\frac{a^{q+1}}{\delta})=0
 \Leftrightarrow (\delta^{q+1}-a^{q+1})(\delta^{q-1}+1)=0.$$
Hence, $ \delta^{q+1}=a^{q+1} \text{ or } \delta^{q-1}=-1$.
 
If $\d^2\neq -a^{q+1}$, then there exists an invertible matrix $P=\left(\begin{array}{cc}\delta & -a^{q+1} \\1 & \delta\end{array}\right)$ such that $C$ is similar to the diagonal matrix $\Lambda=\left(\begin{array}{cc}\delta & 0 \\0 & -\frac{a^{q+1}}{\delta}\end{array}\right)$ , that is,
$P^{-1}CP= \Lambda.$
Then we can calculate that
\begin{equation}
\label{ }\begin{split}
C^i &=(P\Lambda P^{-1})^{i}=P\Lambda^iP^{-1}\\
 &=\frac{1}{\delta^2+a^{q+1}}\left(\begin{array}{cc}\delta & -a^{q+1} \\1 & \delta\end{array}\right) \cdot \left(\begin{array}{cc}\delta & 0 \\0 & -\frac{a^{q+1}}{\delta}\end{array}\right)^i \cdot \left(\begin{array}{cc}\delta & a^{q+1}\\ -1 & \delta\end{array}\right)  \\
 &=\frac{1}{\delta^2+a^{q+1}} \left(\begin{array}{cc}\delta^{i+2}+(-\d)^{-i} a^{(i+1)(q+1)} & a^{q+1}\delta^{i+1}+(-\d)^{-i+1} a^{(i+1)(q+1)} \\ \delta^{i+1}+(-\d)^{-i+1}  a^{i(q+1)}   & a^{q+1}\delta^{i}+(-\d)^{-i+2}  a^{i(q+1)} \end{array}\right).
\end{split}
\end{equation}
Hence, $v_i$ can be given by the following formula $$v_i=\frac{\delta^{i+2}+(-\d)^{-i} a^{(i+1)(q+1)}}{\delta^2+a^{q+1}}. $$

\subsection{Even characteristic}
In the even characteristic case, $c=\delta+a^{q+1}\delta^{-1}$ and
$$c^q+c=0 \Leftrightarrow \delta^{q-1}=1 \text{ or }\delta^{q+1}=a^{q+1}.$$
Hence, we can fix $\delta$ as an element in $\mathbb{F}_{q^2}$ with order $n$, which is $q-1$ or $q^2-1$.

\subsubsection{$\text{ord}(\delta)=q-1$}
If $\text{ord}(\delta)=q-1$, then we can assume that $\delta=a^{q+1}$ in this sub-subsection. Firstly let us determine the order of the automorphism $\sigma$.
Note that
$$\sigma^i=1 \Leftrightarrow v_{i-1}=0 \text{ and } v_{i}=a^i.$$
It is easy to calculate that $v_{i-1}=0 \Leftrightarrow \delta^{2i}=a^{(q+1)i}\Leftrightarrow q-1| i$. If $i=(q-1)k$ for some $k$, then 
$$v_{(q-1)k}=a^{(q-1)k}\Leftrightarrow \frac{\delta^{(q-1)k+2}+\delta^{(1-q)k}a^{((q-1)k+1)(q+1)}}{\delta^2+a^{q+1}} =a^{(q-1)k}=1
\Leftrightarrow q+1|k.$$
Hence, the order  of $\sigma$ is $\text{ord}(\sigma)=q^2-1.$

Now we consider the fixed subfield with respect to the cyclic group $\mathcal{D}$ generated by the automorphism $\sigma$.
For the infinity place $P_{\infty}$ and $1\le i \le q^2-2$,  we have
$\sigma^i(P_{\infty})=P_{\infty}  \Leftrightarrow v_{i-1}=0.$ 
For $1\le i \le q^2-2$,  we have
$\sigma^i(P_{\a,\b})=(P_{\a,\b})$ if and only if $$\frac{a^i\a}{a^{q+1}v_{i-1}\b+v_i}=\a,  \quad \frac{a^{q+1}v_{i-2}\b+v_{i-1}}{a^{q+1}v_{i-1}\b+v_i}=\b,  \quad \b^q+\b=\a^{q+1}.$$
{\bf Case 1: $\alpha \neq 0.$}\\
We have $a^{q+1}v_{i-1}\b+v_i=a^i.$ Assume that $v_{i-1}=0$, then $v_i=a^i$. It follows that $\sigma^i=1$ which is impossible. Hence, $v_{i-1}\neq 0$ and
$$\frac{a^{q+1}v_{i-2}\b+v_{i-1}}{a^{q+1}v_{i-1}\b+v_i}=\b  \Rightarrow v_{i-1}(a^{q+1}\b^2+c\b+1)=0
 \Rightarrow (a^{q+1}\b+\delta)(\b+\frac{1}{\delta})=1.$$
Thus there are two solutions $\b=\delta^{-1} \text{ and } \b=a^{-(q+1)}\delta $. It is easy to calculate that 
$$a^{q+1}v_{i-1}\b+v_i=\begin{cases}
\d^i & \text{ if } \b=\delta^{-1},\\
1 & \text{ if } \b=a^{-(q+1)}\delta.
\end{cases}$$
It is easy to check that $a^{q+1}v_{i-1}\b+v_i\neq a^i$. Therefore, 
the place $P_{\a,\b}$ with $\a\neq 0$ can't be stabilized by the automorphism $\sigma^i$ for $1\le i\le q^2-2$.\\
{\bf Case 2: $\alpha =0.$}\\
It follows that  $v_{i-1}(a^{q+1}\b^2+c\b+1)=0 \text{ and } \b^q+\b=0.$
If $v_{i-1}=0$, then $i=(q-1)k$ for $1\le k\le q$. For each $k$, the places $P_{0,\b}$ with $\b^q+\b=0$ and $P_{\infty}$ are stabilized by the automorphism $\sigma^i$. Hence, the number of the rational places stabilized by the automorphism $\sigma^i$ is $N(\sigma^i)=q+1$.\\
If $v_{i-1}\neq 0$, then $\b=\delta^{-1} \text{ or } \b=a^{-(q+1)}\delta.$
Moreover, it can be calculated directly that $$(\frac{1}{\delta})^q+\frac{1}{\delta}=\frac{\delta^{q-1}+1}{\delta^q}=0 \text{ and } (a^{-(q+1)}\delta)^q+a^{-(q+1)}\delta=a^{-(q+1)}(\delta^q+\delta)=0.$$
Hence,  the places $P_{0,\delta^{-1}}$ and $P_{0, a^{-(q+1)}\delta}$ are stabilized by the automorphism $\sigma^i$ with $v_{i-1}\neq 0$, that is, $N(\sigma^i)=2.$
By the Hurwitz genus formula,
$$q^2-q-2\geq (q^2-1)[2g(H^{\mathcal D})-2]+(q+1)q+2(q^2-2-q).$$
Hence, the genus of the fixed subfield is $g(H^{\mathcal D})=0$ and all places of degree $3$ of $H$ are unramified in $H/H^{\mathcal D}. $

\begin{theorem}
\label{a0c-}
Let $H$ be the Hermitian function field over $\mathbb{F}_{q^2}$ with even characteristic, let $m$ be a positive divisor of $q^2-1$
and let ${\mathcal G}$ be the group generated by the automorphism $\sigma^{\frac{q^2-1}{m}}$ with $d=\gcd(m,q+1)$. Then the genus of the fixed field $H^{\mathcal G}$ is $$g(H^{\mathcal G})=\frac{(q-1)(q+1-d)}{2m}.$$
\end{theorem}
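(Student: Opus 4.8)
The plan is to compute $\deg\bigl(\mathrm{Diff}(H/H^{\mathcal G})\bigr)$ and then extract $g(H^{\mathcal G})$ from the Hurwitz genus formula $q^2-q-2=m\bigl[2g(H^{\mathcal G})-2\bigr]+\deg\bigl(\mathrm{Diff}(H/H^{\mathcal G})\bigr)$, using $|\mathcal G|=m$. First I would note that since $\mathrm{char}(\mathbb{F}_{q^2})=2$ the divisor $m$ of $q^2-1$ is odd, so $H/H^{\mathcal G}$ is tamely ramified; moreover $\mathcal G=\langle\sigma^{(q^2-1)/m}\rangle$ lies inside $\mathcal D=\langle\sigma\rangle$, for which we have just shown $g(H^{\mathcal D})=0$ and that every degree-$3$ place of $H$ is unramified in $H/H^{\mathcal D}$, hence also in the subextension $H/H^{\mathcal G}$ (ramification indices in $H/H^{\mathcal G}$ divide those in $H/H^{\mathcal D}$). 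Thus the formula recorded in the Preliminary section gives $\deg\bigl(\mathrm{Diff}(H/H^{\mathcal G})\bigr)=\sum_{1\neq\rho\in\mathcal G}N(\rho)$, the sum being over the $m-1$ nontrivial elements $\rho_j=\sigma^{j(q^2-1)/m}$ with $j=1,\dots,m-1$; none of these is trivial because $\mathrm{ord}(\sigma)=q^2-1$.

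Next I would reuse the analysis of the fixed rational places of $\sigma^i$ carried out above, applied to $i=j(q^2-1)/m$. Recall that $v_{i-1}=0\iff(q-1)\mid i$; when this holds, $\rho_j$ fixes $P_\infty$ together with the $q$ places $P_{0,\beta}$ with $\beta^q+\beta=0$, so $N(\rho_j)=q+1$; when $v_{i-1}\neq 0$, $\rho_j$ fixes only the two distinct places $P_{0,\delta^{-1}}$ and $P_{0,a^{-(q+1)}\delta}$, so $N(\rho_j)=2$. The computation therefore reduces to counting the $j\in\{1,\dots,m-1\}$ with $(q-1)\mid j(q^2-1)/m$. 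Since $(q^2-1)/m=(q-1)(q+1)/m$, this is equivalent to $m\mid j(q+1)$, and writing $d=\gcd(m,q+1)$ and using $\gcd(m/d,(q+1)/d)=1$ it becomes $(m/d)\mid j$, which holds for precisely $d-1$ of the indices $j=1,\dots,m-1$.

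Consequently $\deg\bigl(\mathrm{Diff}(H/H^{\mathcal G})\bigr)=(d-1)(q+1)+2(m-d)=(d-1)(q-1)+2(m-1)$, and feeding this into the Hurwitz formula and simplifying yields $2g(H^{\mathcal G})=\bigl(q^2-q-(d-1)(q-1)\bigr)/m=(q-1)(q+1-d)/m$, i.e.\ the stated genus. The hard part here is purely bookkeeping: one must check that the two generic fixed places $P_{0,\delta^{-1}}$ and $P_{0,a^{-(q+1)}\delta}$ are already among the $P_{0,\beta}$ with $\beta^q+\beta=0$ (so that they are not counted twice in the case $v_{i-1}=0$), that $P_{0,\delta^{-1}}\neq P_{0,a^{-(q+1)}\delta}$, which follows from $\delta=a^{q+1}$ having order $q-1>1$ so that $\delta^2\neq a^{q+1}$, and that the divisibility reduction $m\mid j(q+1)\iff(m/d)\mid j$ is carried out correctly; the degenerate case $q=2$ should be excluded from the outset.
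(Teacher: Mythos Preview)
Your proof is correct and follows essentially the same approach as the paper. The paper's proof is extremely terse: it simply observes that $|\mathcal G\cap\langle\sigma^{q-1}\rangle|=\gcd(m,q+1)=d$ and then writes down the Hurwitz formula $q^2-q-2=m[2g(H^{\mathcal G})-2]+(q+1)(d-1)+2(m-d)$, relying entirely on the preceding analysis of $N(\sigma^i)$ for $\mathcal D$; your argument is the same computation with the bookkeeping (tame ramification, unramified degree-$3$ places, the divisibility reduction $m\mid j(q+1)\iff (m/d)\mid j$) spelled out explicitly.
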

\begin{proof}
It is easy to check that the number of automorphisms in the intersection of ${\mathcal G}$ and $\langle \sigma^{q-1} \rangle$ is $d=\gcd(m,q+1).$
Hence, this theorem follows immediately from the Hurwitz genus formula $$q^2-q-2=m[2g(H^{\mathcal G})-2]+(q+1)(d-1)+2(m-d).$$
\end{proof}

\subsubsection{$\delta^{q+1}=a^{q+1}$}
If $\delta^{q+1}=a^{q+1}$, then we can assume that $\delta=a$ in this sub-subsection.
Hence, $c=\delta+\delta^q$ and $$v_i=\frac{\delta^{i+1}+\d^{q(i+1)}}{\d+\d^q}.$$
Now we can determine the order of $\sigma$, since
$\sigma^i=1 \Leftrightarrow v_{i-1}=0 \text{ and } v_{i}=a^i.$
It is easy to see that $v_{i-1}=0 \Leftrightarrow \delta^{i}+\d^{qi}=0 \Leftrightarrow \d^{i(q-1)}=1 \Leftrightarrow q+1|i$. For $i=(q+1)k$, we have
$$v_i=a^i \Leftrightarrow \frac{\d^{(q+1)k+1}+\d^{q((q+1)k+1)}}{\d+\d^q}=\d^{(q+1)k}=a^{(q+1)k}.$$
Hence, $\text{ord}(\sigma)=q+1.$

Then we consider the fixed subfield with respect to the cyclic group $\mathcal{D}$ generated by the automorphism $\sigma$.
For $1\le i\le q$, we know $v_{i-1}\neq 0$ and $\sigma^i(P_{\infty})\neq P_{\infty}. $ Moreover,
$\sigma^i(P_{\a,\b})=P_{\a,\b}$ if and only if  $$\frac{a^i\a}{a^{q+1}v_{i-1}\b+v_i}=\a, \quad \frac{a^{q+1}v_{i-2}\b+v_{i-1}}{a^{q+1}v_{i-1}\b+v_i}=\b, \quad \b^q+\b=\a^{q+1}.$$
It follows from the second equation that $a^{q+1}\b^2+c\b+1=0$. Hence, $$\b=\d^{-1} \text{ or } \b=\d^{-q}.$$
If $\b=\d^{-1}$, then $a^{q+1}v_{i-1}\b+v_i=a^{q+1}v_{i-1}\d^{-1}+v_i=\d^i=a^i$ and $(\d^{-1})^q+\d^{-1}=\d^{-(q+1)}(\d^q+\d)\neq 0.$\\
If $\b=\d^{-q}$, then $a^{q+1}v_{i-1}\b+v_i=a^{q+1}v_{i-1}\d^{-q}+v_i=\d^{iq}\neq a^i$ and $(\d^{-q})^q+\d^{-q}=\d^{-(q+1)}(\d^q+\d)\neq 0.$\\
Thus the places $P_{\a,\d^{-1}}$ with $\a^{q+1}=\d^{-q}+\d^{-1}$ are stabilized by the automorphism $\sigma^i$. Hence, $N(\sigma^i)=q+1$ for $1\le i \le q$. By the Hurwitz genus formula,
$$q^2-q-2\geq (q+1)[2g(H^{\mathcal D})-2]+q(q+1).$$
Hence, the genus of the fixed subfield is $g(H^{\mathcal D})=0$ and all places of degree $3$ of $H$ are unramified in $H/H^{\mathcal D}. $

\begin{theorem}
\label{a0c+}
Let $H$ be the Hermitian function field over $\mathbb{F}_{q^2}$ with even characteristic, let $m$ be a positive divisor of $q+1$
and let ${\mathcal G}$ be the group generated by the automorphism $\sigma^{\frac{q+1}{m}}$. Then the genus of the fixed field $H^{\mathcal G}$ is $$g(H^{\mathcal G})=\frac{(q-1)(q+1-m)}{2m}.$$
\end{theorem}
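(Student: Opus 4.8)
The plan is to mimic the proofs of Theorems~\ref{a0c-} and~\ref{a002m}: reduce everything to the already-analyzed cyclic group ${\mathcal D}=\langle\sigma\rangle$ of order $q+1$, then transfer the ramification data down to the subgroup ${\mathcal G}=\langle\sigma^{(q+1)/m}\rangle$ and feed it into the Hurwitz genus formula. First I would record the elementary reductions. The group ${\mathcal G}$ is cyclic of order $m$, and since $q$ is a power of $2$ the divisor $m$ of $q+1$ is odd; hence $\gcd(m,\mathrm{char}\,\mathbb{F}_{q^2})=1$ and the extension $H/H^{\mathcal G}$ is tamely ramified. Because ${\mathcal G}\subseteq{\mathcal D}$ and it has just been shown that all places of degree $3$ of $H$ are unramified in $H/H^{\mathcal D}$, the same holds in $H/H^{\mathcal G}$. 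Consequently the formula from Section~2 applies:
\[
\deg\bigl(\mathrm{Diff}(H/H^{\mathcal G})\bigr)=\sum_{1\neq\rho\in{\mathcal G}}N(\rho).
\]

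Next I would evaluate the right-hand side. Every nontrivial element of ${\mathcal G}$ has the form $\sigma^{i}$ with $i=k(q+1)/m$ for some $1\le k\le m-1$, so $1\le i\le q$; and for such exponents the analysis preceding the theorem shows that $\sigma^{i}$ fixes no rational place other than the $q+1$ places $P_{\alpha,\delta^{-1}}$ with $\alpha^{q+1}=\delta^{-q}+\delta^{-1}$ (here $\delta^{-q}+\delta^{-1}=\delta^{-(q+1)}(\delta^{q}+\delta)\in\mathbb{F}_q^{*}$, so the norm equation has exactly $q+1$ solutions $\alpha$). Hence $N(\sigma^{i})=q+1$ for each of the $m-1$ nontrivial $\rho\in{\mathcal G}$, so $\deg(\mathrm{Diff}(H/H^{\mathcal G}))=(m-1)(q+1)$; equivalently, each of these $q+1$ places is totally ramified with ramification index $m$ and different exponent $m-1$, while $P_\infty$ and every other rational place are unramified.

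Finally I would substitute into the Hurwitz genus formula $2g(H)-2=m\,[2g(H^{\mathcal G})-2]+\deg(\mathrm{Diff}(H/H^{\mathcal G}))$ with $g(H)=(q^2-q)/2$, obtaining $q^2-q-2=m\,[2g(H^{\mathcal G})-2]+(m-1)(q+1)$, and solve for $g(H^{\mathcal G})$; a short computation gives $m\,[2g(H^{\mathcal G})-2]=(q+1)(q-1-m)$ and hence $g(H^{\mathcal G})=\frac{(q-1)(q+1-m)}{2m}$. There is no real obstacle here beyond this bookkeeping: the only points requiring care are that no wild ramification occurs and that the degree-$3$ places stay unramified, both of which are immediate from $m\mid q+1$ being odd and from the behaviour of $H/H^{\mathcal D}$ established just above.
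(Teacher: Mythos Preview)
Your proposal is correct and follows exactly the same approach as the paper: both rely on the preceding analysis showing $N(\sigma^i)=q+1$ for all $1\le i\le q$ and that no degree-$3$ place ramifies in $H/H^{\mathcal D}$, then feed $\deg(\mathrm{Diff}(H/H^{\mathcal G}))=(m-1)(q+1)$ into the Hurwitz genus formula. The paper's proof is a single line stating that formula, whereas you have (appropriately) spelled out the justifications it leaves implicit, such as $m\mid q+1$ being odd and the inheritance of the degree-$3$ unramification from ${\mathcal D}$ to ${\mathcal G}$.
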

\begin{proof}
This theorem follows from the Hurwitz genus formula
$$q^2-q-2=m[2g(H^{\mathcal G})-2]+(m-1)(q+1).$$
\end{proof}

\subsection{Odd characteristic}
In the odd characteristic case, $c=\delta-a^{q+1}\delta^{-1}$ and
$$c^q+c=0 \Leftrightarrow  \delta^{q+1}=a^{q+1} \text{ or } \delta^{q-1}=-1.$$

\subsubsection{$\d^{q-1}=-1$}
If $\d^{q-1}=-1$, then   we assume that  $\delta=a^{\frac{q+1}{2}}$ and $\text{ord}(\delta)=2(q-1)$  in this sub-subsection.
It can be calculated directly that $$v_{i-1}=0 \Leftrightarrow \d^{i+1}+(-1)^{i-1}\frac{a^{(q+1)i}}{\d^{i-1}}=0 \Leftrightarrow \d^{2i}=(-1)^i a^{(q+1)i} \Leftrightarrow 2|i.$$
For the even integer $i$, 
$$v_i=a^i \Leftrightarrow \d^{i+2}+(-1)^i\frac{a^{(i+1)(q+1)}}{\d^i}=a^i(\d^2+a^{q+1})\Leftrightarrow \d^i=a^i \Leftrightarrow 2(q+1)|i.$$
Hence, $\text{ord}(\sigma)=2(q+1).$

Now we consider the cyclic group $\mathcal{D}$ generated by the automorphism $\sigma$.
For the infinity place $P_{\infty}$ and $1\le i \le 2q+1$,  we have
$\sigma^i(P_{\infty})=P_{\infty}  \Leftrightarrow v_{i-1}=0.$ 
For $1\le i\le 2q+1$,  we have
$\sigma^i(P_{\a,\b})=P_{\a,\b}$ if and only if
$$\frac{a^i\a}{a^{q+1}v_{i-1}\b+v_i}=\a, \quad \frac{a^{q+1}v_{i-2}\b+v_{i-1}}{a^{q+1}v_{i-1}\b+v_i}=\b, \quad \b^q+\b=\a^{q+1}.$$
{\bf Case 1: $\a\neq 0$.}\\
In this case, $a^{q+1}v_{i-1}\b+v_i=a^i$. Assume that $v_{i-1}=0$, then $v_i=a^i$. It follows that $\s^i=1$ which is impossible for $1\le i \le 2q+1$.
Hence, $v_{i-1}\neq 0$ and
$$v_{i-1}(a^{q+1}\b^2+c\b-1)=0 
 \Rightarrow (a^{q+1}\b+\d)(\b-\d^{-1})=0
 \Rightarrow  \b=-a^{-(q+1)}\d \text{ or } \b=\d^{-1}$$
 from the second equation.
It is easy to check that
$$a^{q+1}v_{i-1}\b+v_i=\begin{cases}
\d^i & \text{ if } \b=\delta^{-1},\\
({-\d})^i  & \text{ if } \b=-a^{-(q+1)}\delta.
\end{cases}$$
It can be verified directly that $a^{q+1}v_{i-1}\b+v_i\neq a^i$. 
Hence, the places $P_{\a,\b}$ with $\a\neq 0$ can't be stabilized by the automorphism $\sigma^i$ for $1\le i \le 2q+1$. \\
{\bf Case 2: $\a= 0$.}\\
From the second equation, we have $$v_{i-1}(a^{q+1}\b^2+c\b-1)=0.$$
If $v_{i-1}=0$, then the places $P_{0,\b}$ with $\b^q+\b=0$ and $P_{\infty}$ are stabilized by the automorphism $\sigma^i$. Hence, $N(\sigma^i)=q+1$ for each even integer $i$.\\
If $v_{i-1}\neq 0$, that is, $i$ is odd, then $a^{q+1}\b^2+c\b-1=0.$ Hence, $ \b=\d^{-1} \text{ or }\b=-a^{-(q+1)}\d.$
It follows that $\b^q+\b=0$, since $\d^{q-1}=-1$. 
Hence, the places $P_{0,\d^{-1}}$ and  $P_{0,-a^{-(q+1)}\d}$ are stabilized by the automorphisms $\sigma^i$, that is, $N(\sigma^i)=2$  for each odd integer $i$. By the Hurwitz genus formula,
$$q^2-q-2\geq 2(q+1)[2g(H^{\mathcal D})-2]+(q+1)q+2(q+1).$$
Hence, the genus of fixed subfield is $g(H^{\mathcal D})=0$ and all places of degree $3$ of $H$ are unramified in $H/H^{\mathcal D}. $

\begin{theorem}
\label{oa0c-}
Let $H$ be the Hermitian function field over $\mathbb{F}_{q^2}$ with odd characteristic, let $m$ be a positive divisor of $2(q+1)$
and let ${\mathcal G}$ be the group generated by the automorphism $\sigma^{\frac{2(q+1)}{m}}$.
If $m|q+1$, then the genus of the fixed field $H^{\mathcal G}$ is $$g(H^{\mathcal G})=\frac{(q-1)(q+1-m)}{2m}.$$
Otherwise, the genus of the fixed field $H^{\mathcal G}$ is $$g(H^{\mathcal G})=\frac{(q-1)(q+1-\frac{m}{2})}{2m}.$$
\end{theorem}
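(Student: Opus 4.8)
The plan is to obtain the different of $H/H^{\mathcal G}$ directly from the stabilizer counts $N(\sigma^i)$ already computed for the cyclic group ${\mathcal D}=\langle\sigma\rangle$ of order $2(q+1)$, and then substitute into the Hurwitz genus formula. Since $\text{ord}(\sigma)=2(q+1)$, the group ${\mathcal G}=\langle\sigma^{2(q+1)/m}\rangle$ has order $m$, and putting $\ell:=2(q+1)/m$ its non-identity elements are exactly $\sigma^{\ell k}$ for $1\le k\le m-1$, all of whose exponents satisfy $1\le \ell k\le 2q+1$. Because $H/H^{\mathcal D}$ is tamely ramified and all places of degree $3$ of $H$ are unramified in $H/H^{\mathcal D}$ (established just before the statement), the same holds for the intermediate extension $H/H^{\mathcal G}$, so
$$\deg(\text{Diff}(H/H^{\mathcal G}))=\sum_{k=1}^{m-1}N(\sigma^{\ell k}).$$
From the case analysis preceding the theorem, $N(\sigma^i)=q+1$ when $i$ is even (the fixed places being $P_\infty$ and the $q$ places $P_{0,\b}$ with $\b^q+\b=0$) and $N(\sigma^i)=2$ when $i$ is odd (the fixed places being $P_{0,\d^{-1}}$ and $P_{0,-a^{-(q+1)}\d}$), so the whole computation reduces to counting the even exponents among $\ell k$, $1\le k\le m-1$.

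The key elementary point is a $2$-adic dichotomy: writing $\ell m=2(q+1)$, if $\ell$ were even then $m=(q+1)/(\ell/2)$ would divide $q+1$; hence $m\nmid(q+1)$ forces $\ell$ odd, and then $\ell\mid q+1$ (as $\gcd(\ell,2)=1$), so $m=2(q+1)/\ell=2s$ with $s:=(q+1)/\ell$, in particular $m$ even. First I would treat the case $m\mid(q+1)$: here $\ell$ is even, every exponent $\ell k$ is even, all $m-1$ non-identity elements contribute $q+1$, so $\deg(\text{Diff}(H/H^{\mathcal G}))=(m-1)(q+1)$. Feeding this, together with $2g(H)-2=q^2-q-2$, into
$$q^2-q-2=m\bigl[2g(H^{\mathcal G})-2\bigr]+\deg(\text{Diff}(H/H^{\mathcal G}))$$
and simplifying via $q^2-q-2=(q+1)(q-2)$ yields $g(H^{\mathcal G})=(q-1)(q+1-m)/(2m)$, the first formula.

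For the remaining case $m\mid 2(q+1)$, $m\nmid(q+1)$, the integer $\ell$ is odd, so the parity of $\ell k$ equals that of $k$; among $k=1,\dots,m-1=2s-1$ there are $s-1$ even and $s$ odd values, giving
$$\deg(\text{Diff}(H/H^{\mathcal G}))=(s-1)(q+1)+2s=\Bigl(\tfrac{m}{2}-1\Bigr)(q+1)+m.$$
Substituting into the Hurwitz formula and simplifying then produces $g(H^{\mathcal G})=(q-1)\bigl(q+1-\tfrac{m}{2}\bigr)/(2m)$, the second formula. I do not expect a genuine obstacle: the work lies in the preceding ramification analysis of $H/H^{\mathcal D}$, and the only points needing a little care are the $2$-adic parity dichotomy for $\ell$, the observation that the counts $N(\sigma^i)$ transfer verbatim to the exponents $\ell k$ (which all lie in the admissible range), and the fact — inherited from $H/H^{\mathcal D}$ — that no place of degree $3$ ramifies in $H/H^{\mathcal G}$.
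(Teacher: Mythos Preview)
Your proposal is correct and follows essentially the same approach as the paper's proof: both hinge on the parity of $\ell=2(q+1)/m$ (even iff $m\mid q+1$), use the precomputed counts $N(\sigma^i)=q+1$ for even $i$ and $N(\sigma^i)=2$ for odd $i$, and plug the resulting different degree into the Hurwitz formula. Your write-up is simply more explicit about the parity count and about why tameness and unramifiedness of degree-$3$ places descend from $H/H^{\mathcal D}$ to $H/H^{\mathcal G}$.
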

\begin{proof}
If $m|q+1$, then $\frac{2(q+1)}{m}$ is even. By the Hurwitz genus formula,
$$q^2-q-2=m[2g(H^{\mathcal G})-2]+(q+1)(m-1).$$
If $m\nmid q+1$, then $\frac{2(q+1)}{m}$ is odd. By the Hurwitz genus formula,
$$q^2-q-2=m[2g(H^{\mathcal G})-2]+(q+1)(\frac{m}{2}-1)+2\cdot \frac{m}{2}.$$
This theorem follows immediately.
\end{proof}

\subsubsection{$\d^{q+1}=a^{q+1}$}
If $\d^{q+1}=a^{q+1}$, then we assume that $\d=a$ in this sub-subsection.  Firstly let us determine the order of the automorphism $\sigma$. Note that
$\sigma^i=1 \Leftrightarrow v_{i-1}=0\text{ and } v_{i}=a^i.$
It can be calculated that 
$$v_{i-1}=0  \Leftrightarrow \delta^{i+1}+(-1)^{i-1} \frac{a^{i(q+1)}}{\d^{i-1}}=0 
 \Leftrightarrow \d^{2i}=(-1)^i a^{i(q+1)} \Leftrightarrow a^{(q-1)i}=(-1)^i.$$
If $q\equiv 1 (\text{mod} 4)$, then $v_{i-1}=0 \Leftrightarrow \frac{q+1}{2}|i.$
It is easy to verify that $v_{\frac{q+1}{2}}=a^{\frac{q+1}{2}}.$
Hence, the order of $\sigma$ is $$\text{ord}(\sigma)=(q+1)/2.$$
If $q\equiv 3 (\text{mod} 4)$, then $v_{i-1}=0 \Leftrightarrow q+1|i.$ It is easy to verify that $v_{q+1}=a^{q+1}.$
Hence, the order of $\sigma$ is $$\text{ord}(\sigma)=q+1.$$

Let ${\mathcal D}$ be the cyclic group generated by the automorphism $\sigma$.
Here we assume that $3\nmid (q+1)$, then all places of degree $3$ of $H$ are unramified in $H/H^{\mathcal{D}}$.
For $1\le i\le \text{ord}(\sigma)-1$, then $v_{i-1}\neq 0$ and $\sigma^i(P_{\infty})\neq P_{\infty}.$
Hence, $\sigma^i(P_{\a,\b})=P_{\a,\b}$ if and only if
$$\frac{a^i\a}{a^{q+1}v_{i-1}\b+v_i}=\a, \quad \frac{a^{q+1}v_{i-2}\b+v_{i-1}}{a^{q+1}v_{i-1}\b+v_i}=\b, \quad \b^q+\b=\a^{q+1}.$$
It is easy to see that $a^{q+1}\b^2+c\b-1=0$, since $v_{i-1}\neq 0$. It follows that $$\b=\d^{-1} \text{ or } \b=-a^{-(q+1)}\d.$$
If $\b=\d^{-1}$, then $a^{q+1}v_{i-1}\b+v_i=\d^i=a^i$ and $\b^q+\b=\d^{-q}+\d^{-1}=\d^{-q-1}(\d^q+\d)\neq 0.$
If $\b=-a^{-(q+1)}\d$, then $a^{q+1}v_{i-1}\b+v_i=-\d v_{i-1}+v_i=(-1)^ia^{iq}\neq a^i$ and $ \b^q+\b=-a^{-(q+1)}(\d^q+\d)\neq 0.$\\
Thus the places $P_{\a,\d^{-1}}$ with $\a^{q+1}=\d^{-q}+\d^{-1}$ are stabilized by the automorphism $\sigma^i$. Hence, $N(\sigma^i)=q+1$ for $1\le i \le \text{ord}(\s)-1.$
By the Hurwitz genus formula,
$$q^2-q-2=\text{ord}(\s)\cdot [2g(H^{\mathcal D})-2]+(\text{ord}(\s)-1)(q+1).$$
Hence, the genus of the fixed subfield is
$$g(H^{\mathcal D})=\begin{cases}
    (q-1)/2 & \text{ if } q\equiv 1 (\text{mod } 4) \\
     0   & \text{ if } q\equiv 3 (\text{mod } 4).
\end{cases}$$

\begin{theorem}
\label{oa0c+}
Let $H$ be the Hermitian function field over $\mathbb{F}_{q^2}$ with odd characteristic. Assume that $3\nmid (q+1)$.  Let $m$ be a positive divisor of $\text{ord}(\s)$
and let ${\mathcal G}$ be a subgroup of ${\mathcal D}$ with order $m$. Then the genus of the fixed field of $H^{\mathcal G}$ is
$$g(H^{\mathcal G})=\frac{(q-1)(q+1-m)}{2m}.$$
\end{theorem}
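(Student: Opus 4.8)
The strategy is to reduce Theorem~\ref{oa0c+} to the genus computation already carried out for the full cyclic group $\mathcal D$ generated by $\sigma$, exactly as was done for Theorems~\ref{a0c-}, \ref{a0c+}, and \ref{oa0c-}. We have just established, in the discussion preceding the statement, that $g(H^{\mathcal D})=0$ when $q\equiv 3(\bmod\,4)$ (where $\text{ord}(\sigma)=q+1$) but $g(H^{\mathcal D})=(q-1)/2$ when $q\equiv 1(\bmod\,4)$ (where $\text{ord}(\sigma)=(q+1)/2$), together with the fact that all places of degree $3$ of $H$ are unramified in $H/H^{\mathcal D}$ under the hypothesis $3\nmid(q+1)$. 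The key observation is that in both parity cases the \emph{same} Hurwitz relation
$$q^2-q-2=\text{ord}(\sigma)\cdot[2g(H^{\mathcal D})-2]+(\text{ord}(\sigma)-1)(q+1)$$
holds, and this is exactly the identity we will iterate down to an arbitrary subgroup $\mathcal G\le\mathcal D$ of order $m$.

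\textbf{Key steps.} First, since $\mathcal D$ is cyclic of order $\text{ord}(\sigma)$ and $m\mid\text{ord}(\sigma)$, there is a unique subgroup $\mathcal G=\langle\sigma^{\text{ord}(\sigma)/m}\rangle$ of order $m$, and $H^{\mathcal D}\subseteq H^{\mathcal G}\subseteq H$ with $[H:H^{\mathcal G}]=m$. Second, I would analyze which rational places of $H$ are ramified in $H/H^{\mathcal G}$: from the computation above, the only rational places with nontrivial stabilizer inside $\mathcal D$ are the $q+1$ places $P_{\alpha,\delta^{-1}}$ with $\alpha^{q+1}=\delta^{-q}+\delta^{-1}$, and for each such place the full decomposition group in $\mathcal D$ is all of $\mathcal D$ (every $\sigma^i$ with $1\le i\le\text{ord}(\sigma)-1$ fixes it). Hence in $H/H^{\mathcal G}$ each of these $q+1$ places is totally ramified with ramification index $m$, and since $\gcd(m,p)=1$ (as $m\mid(q+1)$ in the relevant case, or $m\mid(q+1)/2$, and $p\mid q$) the ramification is tame, giving different exponent $m-1$. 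All other rational places, and all degree-$3$ places, are unramified. Third, I would plug this into the Hurwitz genus formula:
$$q^2-q-2=m\cdot[2g(H^{\mathcal G})-2]+(q+1)(m-1),$$
and solve for $g(H^{\mathcal G})$, obtaining $2g(H^{\mathcal G})-2=(q^2-q-2-(q+1)(m-1))/m=(q^2-2q-1-(q+1)m+2q+2-\cdots)/m$, which simplifies to $g(H^{\mathcal G})=(q-1)(q+1-m)/(2m)$ after collecting terms (note $q^2-q-2-(q+1)(m-1)=q^2-q-2-(q+1)m+q+1=q^2-1-(q+1)m=(q+1)(q-1-m)$, so $2g-2=(q+1)(q-1-m)/m$ and $g=(q-1)(q+1-m)/(2m)$).

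\textbf{Main obstacle.} The only genuine subtlety is verifying that the Hurwitz relation for $H/H^{\mathcal G}$ really does have $(q+1)(m-1)$ as its different degree uniformly in both parity cases for $q$, since the two cases have different values of $g(H^{\mathcal D})$ and $\text{ord}(\sigma)$. This works precisely because the different of $H/H^{\mathcal D}$ is supported on the same $q+1$ places with each being totally and tamely ramified in $\mathcal D$; transitivity of the different (or equivalently recomputing $N(\sigma^i)=q+1$ for each nontrivial $\sigma^i\in\mathcal G$) then gives $\deg\mathrm{Diff}(H/H^{\mathcal G})=\sum_{1\ne\sigma^i\in\mathcal G}N(\sigma^i)=(m-1)(q+1)$ directly, without reference to $g(H^{\mathcal D})$ at all. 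So the cleanest route is to bypass $H^{\mathcal D}$ and apply the formula $\deg\mathrm{Diff}(H/H^{\mathcal G})=\sum_{1\ne\tau\in\mathcal G}N(\tau)$ stated in the Preliminary section directly to $\mathcal G$, using that every nonidentity element of $\mathcal G$ is a power $\sigma^i$ with $1\le i\le\text{ord}(\sigma)-1$ and hence fixes exactly those $q+1$ rational places. This makes the proof a one-line invocation of the Hurwitz formula once the count $N(\sigma^i)=q+1$ is in hand.
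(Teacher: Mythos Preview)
Your proposal is correct and follows essentially the same approach as the paper: both proofs apply the Hurwitz genus formula $q^2-q-2=m[2g(H^{\mathcal G})-2]+(m-1)(q+1)$, using the fact established in the preceding discussion that every nonidentity power $\sigma^i$ fixes exactly the $q+1$ rational places $P_{\alpha,\delta^{-1}}$ with $\alpha^{q+1}=\delta^{-q}+\delta^{-1}$. Your write-up is more detailed than the paper's one-line proof, in particular your explicit check that the different degree is $(m-1)(q+1)$ uniformly in both parity cases via $N(\sigma^i)=q+1$, but the substance is the same.
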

\begin{proof}
This theorem follows from the Hurwitz genus formula $$q^2-q-2=m [2g(H^{\mathcal G})-2]+(m-1)(q+1).$$
\end{proof}

\begin{remark}
The places of degree $3$ of $H$ may be ramified only if $q\equiv 1 (\text{mod } 4)$ and $3| (q+1)$ hold true at the same time.
Hence, we can only assume that $q\not \equiv 5 (\text{mod } 12)$  in Theorem \ref{oa0c+} by the Chinese Remainder Theorem.
\end{remark}

\end{document}